\NewDocumentCommand{\rot}{O{30} O{1em} m}{\makebox[#2][l]{\rotatebox{#1}{#3}}}%
\newtheoremstyle{mythm} 
{6pt}
{6pt}
{\it}
{}
{\bf}
{.}
{.5em}
{}
\newtheoremstyle{mydef}
{6pt}
{6pt}
{}
{}
{\bf}
{.}
{.5em}
{}
\newtheoremstyle{myrem}
{6pt}
{6pt}
{}
{}
{\bf}
{.}
{.5em}
{}
\newtheoremstyle{myex}
{6pt}
{3ex}
{}
{}
{\bf}
{.}
{.5em}
{}
\theoremstyle{mythm}
\newtheorem{theorem}{Theorem}[section]
\newtheorem{lemma}[theorem]{Lemma}
\newtheorem{proposition}[theorem]{Proposition}
\theoremstyle{mydef}
\newtheorem{definition}[theorem]{Definition}
\theoremstyle{myrem}
\newtheorem{notation}[theorem]{Notation}
\numberwithin{equation}{section}
\theoremstyle{myex}
\newtheorem{example}[theorem]{Example}
\newcommand{\Hom}{{\rm Hom}}
\newcommand{\mymat}[4]{\left(\begin{smallmatrix}#1&#2\\#3&#4\end{smallmatrix}\right)}
\newcounter{ithmcount}
\newenvironment{myitemize}{
\begin{list}{$\bullet$}
{\labelwidth30pt \leftmargin12pt \topsep3pt \itemsep0pt \parsep6pt}}
{\end{list}}               
\newenvironment{iprf}{\begin{list}{{\rm
	\alph{ithmcount})}}{\usecounter{ithmcount}\labelwidth-5pt
      \leftmargin0pt \topsep3pt \itemsep1pt \parsep2pt}}{\qedhere\end{list}}
\newenvironment{ithm}{\begin{list}{{\rm \alph{ithmcount})}}{\usecounter{ithmcount}\labelwidth18pt
      \leftmargin18pt \topsep3pt \itemsep1pt \parsep4pt}}{\end{list}}
\newcommand{\ra}{\rightarrow}
\newcommand{\ol}{\overline}
\newcommand{\N}{\mathbb N}
\newcommand{\Z}{\mathbb Z}
\newcommand{\GL}{\mathrm{GL}}
\newcommand{\Aut}{\mathrm{Aut}}
\newcommand{\Comp}{\mathrm{CP}}
\newcommand{\Stab}{\mathrm{Stab}}
\newcommand{\ind}{\mathrm{ind}}
\newcommand{\DC}{\mathrm{DC}}
\newcommand{\GN}{\mathcal N}
\newcommand{\w}{\Delta}
\newcommand{\myw}[2]{\w_{#1}^{#2}}
\newcommand{\mydelta}[2]{\w_{#2}^{#1}}   
\newcommand{\Irr}{I}
\newcommand{\diag}{{\rm diag}}
\renewcommand{\O}{\mathcal X}
\renewcommand{\SS}{\mathcal S}
\newcommand{\TT}{\mathcal T}
\newcommand{\K}{\mathcal K}
\renewcommand{\leq}{\leqslant}
\renewcommand{\geq}{\geqslant}
\begin{document}

 \vspace*{-0.5cm}

\title{Groups whose orders factorise into at most four primes}
 
\author[H.~Dietrich]{Heiko Dietrich}
\email{heiko.dietrich@monash.edu}
\address[H.~Dietrich and X.~Pan]{School of Mathematics, Monash University, VIC 3800, Australia}
\author[B.~Eick]{Bettina Eick}
\email{beick@tu-bs.de}
\address[B.~Eick]{Institut Computational Mathematics, Technische Universit\"at Braunschweig, Germany}
\author[X.\ Pan]{Xueyu Pan}
\email{xpan.eileen@gmail.com}
\thanks{The first author was supported by an ARC grant DP190100317. The third author was supported by an RTP scholarship; part of this work is based on the MPhil thesis \cite{xp} of the third author. The authors thank Mike Newman  for various helpful discussions.} 
 \keywords{group determination; group enumeration; SmallGroup Database}

\begin{abstract}
The groups whose orders factorise into at most four  primes have been described (up to isomorphism) in various papers. Given such an order  $n$, this paper exhibits a new explicit and compact determination of the isomorphism types of the groups of order $n$ together with effective algorithms to enumerate, construct, and identify these groups. The algorithms are implemented for the computer algebra system GAP.
\end{abstract} 

\maketitle

\vspace*{-0.5cm}

\vspace*{-5.5cm}

\begin{minipage}{15.5cm}
\small \color{red} For the published version please see J.\ Symb.\ Comp.\ 108, 23-40 (2022). This version corrects two minor and obvious typos in Table 4, Cluster 6 and 7 (the red entries). Thanks to Bill Unger (USyd) for pointing these out.
\end{minipage}

\vspace*{5.5cm}

\section{Introduction}\label{s:1}
\noindent It is a major theme in group theory to enumerate and construct all groups of a given order up to isomorphism.  This has been initiated by Cayley (1854) who determined the groups of order at most $6$. Many
publications have followed Cayley's work; a survey is given in Besche-Eick-O'Brien \cite{BEO02}. The enumeration of the (isomorphism types of) groups of order $n$ is a related
problem: The number $\GN(n)$ of (isomorphism types of) groups of order $n$ is 
known for all $n\leq 2000$, see \cite{BEO02}, and for most $n$ at 
most $20000$, see Eick-Horn-Hulpke \cite{EHH16}, but no closed formula is known for general $n$. 
 We refer to the book of Blackburn-Neumann-Venkataraman \cite{BNV07} and the papers by Conway-Dietrich-O'Brien \cite{CDOB08} and Eick-Moede \cite{EM} for  more  information on group enumeration.

Modern computer algebra systems such as GAP \cite{gap} and Magma \cite{magma} contain (different though overlapping parts of) a database of groups of ``small order'': the SmallGroups Library. Given a ``small'' order $n$, this library contains a list of isomorphism type representatives for the groups of order~$n$. The library also contains an identification function for these groups: given a group $G$, one can determine its ID $(n,i)$, meaning that $G$ is isomorphic to the $i$-th group in the database list of groups of order $n$. The SmallGroups Library has become highly popular in the research community. It is therefore an important topic in computational group theory to extend this library.

\thispagestyle{empty}
\enlargethispage{5ex}

Our aim here is to present a new extension to the SmallGroups Library in form of the GAP package {\tt SOTGrps} \cite{xp}: we describe effective construction and identification methods for the groups of order $n$, where $n$ factorises into at most four primes. This is underpinned by a new explicit enumeration and a new compact determination of the groups of such orders.

If $n$ factorises into at most four primes, then its factorisation is of one of the following types: $p$, $pq$,$p^2$, $pqr$, $p^2q$, $p^3$, $pqrs$, $p^2qr$, $p^2q^2$, $p^3q$, and $p^4$, where $p,q,r,s$ are distinct primes. The groups of the square-free orders $p$, $pq$, $pqr$, and $pqrs$ and those of prime-power orders $p, p^2, p^3$ and $p^4$ have been determined by H\"older \cite{Hol93, Hol95b} and this is included in the SmallGroups library already. We refer to Slattery \cite{sf} for a recent discussion of the groups of square-free orders and to Dietrich-Low \cite{cgroups} for a generalisation of \cite{sf}. For a recent and accessible determination of the groups of order dividing $p^5$ we refer to Girnat \cite{Gir03}. It remains to consider the orders in
\[\mathcal{O}=\{p^2 q,\ p^3 q,\ p^2 q^2,\ p^2 q r \;:\; p,q,r \text{ distinct primes}\}.\]
These orders have also already been considered in the past:  Cole \& Glover \cite{cole} and H\"older \cite{Hol93} determined the groups of order  $p^2q$,  Western \cite{Wes99} those of order $p^3q$, Le Vavasseur \cite{Vav99, Vav02} and Lin \cite{Lin74} those of order $p^2q^2$,  and Glenn \cite{Gle06} those of order  $p^2 qr$.  Moreover, Laue \cite{Lau82} considered all orders $p^a q^b$ with $a+b \leq 6$ and $a,b<5$. In an arXiv article, Eick \cite{Ear} provided a compact and uniform enumeration of the orders in~$\mathcal{O}$. Our results here include this and extend it to a full determination. Since many of the orders in $\mathcal{O}$ are cube-free, we mention that Dietrich-Eick \cite{DEi05} and Dietrich-Wilson \cite{cf} developed a construction algorithm and isomorphism test for groups of cube-free order; however, these works do not naturally lead to efficient identification functionality, cf.\ Section \ref{secalg}.

We proceed as follows. In Section \ref{secMR} we describe our main results. In Section \ref{secprel} we prove our enumeration; a proof of the determination follows the same arguments. Our algorithms are discussed in Section~\ref{secconst}.

\section{Main results}\label{secMR}
\noindent For every  $n\in\mathcal{O}$ we provide an explicit closed formula for the number $\GN(n)$ of groups of isomorphism types of order $n$, see Section \ref{secmrenum}; each of these formulas is a generalisation of a PORC function (a polynomial on residue classes), that is, there are finitely many sets of number-theoretic  conditions on the involved primes so that $\GN(n)$ is a polynomial in the  involved primes for each of the condition-sets. The proof of our enumeration translates to new explicit group presentations, which we present in Section \ref{secpres}. These, in turn, lead to new efficient construction and identification algorithms, see Section \ref{secalgmr}.

\subsection{Enumeration}\label{secmrenum}

For integers $u,v\in \N$ the \emph{divisibility Kronecker delta} is \[\w_u^v=\begin{cases}1 &(\text{if }v\mid u)\\ 0 & (\text{otherwise.})
\end{cases}\]

\medskip

\noindent Our first main result is the following theorem; we prove it in  Sections \ref{pf1}--\ref{pf2}. 

\smallskip

\begin{theorem} \label{thp2q}\label{thp3q}\label{thp2qr}\label{thp2q2}
  Let $p$, $q$, and $r$ be distinct primes.
  \begin{ithm}
    
  \item {\bf Order $p^2q$:}
    \begin{myitemize}
    \item $\GN(p^2 q) = 5$ for $q = 2$.
    \item $\GN(p^2 q) = 2 + \tfrac{q+5}{2} \w_{p-1}^q + \w_{p+1}^q 
                        + 2 \w_{q-1}^p + \w_{q-1}^{p^2}$ for $q > 2$.
    \end{myitemize}

  \item {\bf Order $p^3q$:}
   \begin{myitemize}
\item There are two special cases $\GN(2^3.3) = 15$ and $\GN(2^3.7) = 13$.
\item $\GN(p^3q) = 15$ if $q=2$.
\item $\GN(p^3 q) = 12 + 2 \w_{q-1}^4 + \w_{q-1}^8$ if $p=2$ and 
   $q\notin\{3,7\}$.
\item If $p$ and $q$ are both odd, then
\begin{eqnarray*}
\GN(p^3 q) &=& 5 + \tfrac{q^2 + 13 q + 36}{6} \w_{p-1}^q  + (p+5)\w_{q-1}^p  + \tfrac{2}{3}\w_{q-1}^3 \w_{p-1}^q  
              \\&& + \w_{(p+1)(p^2+p+1)}^q(1- \w_{p-1}^q)  + \w_{p+1}^q + 2 \w_{q-1}^{p^2} + \w_{q-1}^{p^3}.
\end{eqnarray*}
\end{myitemize}

  \item {\bf Order $p^2q^2$ with $p>q$:}
  \begin{myitemize}
  \item There is one special case $\GN(3^2. 2^2) = 14$.
  \item $\GN(p^2q^2) = 12+4\w_{p-1}^4$ if $q = 2$ and $p\ne 3$.
  \item $\GN(p^2 q^2) = 4 + \tfrac{1}{2}(q^2+q+4) \w_{p-1}^{q^2}  + (q+6) \w_{p-1}^q + 2\w_{p+1}^q + \w_{p+1}^{q^2}$ if $q > 2$.
  \end{myitemize}

\item  {\bf Order $p^2qr$ with $q<r$:}
  \begin{myitemize}
  \item There is one special case $\GN(2^2. 3. 5 ) = 13$.
  \item $\GN(p^2qr) =  10   + (2r+7) \w_{p-1}^r    + 3 \w_{p+1}^r    + 6 \w_{r-1}^p    + 2 \w_{r-1}^{p^2}$ if $q=2$.
 \item If $q>2$ and $(p,q,r)\ne (2,3,5)$, then 
\begin{eqnarray*}
  \GN(p^2qr) &=&  2 \;+\; (p^2-p) \w_{q-1}^{p^2} \w_{r-1}^{p^2} \;+\; \w_{p+1}^r \;+\; \w_{p+1}^q\;+\;\w_{r-1}^{p^2}   \;+\; \w_{q-1}^{p^2} \\[0.5ex]
   && \;+\; (p-1) \big(\w_{q-1}^{p^2} \w_{r-1}^{p}
      \;+\;\w_{r-1}^{p^2} \w_{q-1}^p\;+\;2 \w_{r-1}^p \w_{q-1}^p\big) \\[0.5ex]
   && \;+\; \w_{r-1}^p \w_{q-1}^p \;+\; \w_{r-1}^p \w_{p-1}^q\;+\; \tfrac{1}{2}(q-1)(q+4) \w_{p-1}^q \w_{r-1}^q  \\[0.5ex]
   && \;+\; \tfrac{1}{2}(q-1)\big(\w_{p+1}^q \w_{r-1}^q \;+\; \w_{p-1}^q 
           \;+\; \w_{p-1}^{qr} \;+\; 2 \w_{r-1}^{pq} \w_{p-1}^q \big) \\[0.5ex]
   &&   \;+\; \tfrac{1}{2}(qr+1) \w_{p-1}^{qr}   \;+\; \tfrac{1}{2}(r+5) \w_{p-1}^r (1 + \w_{p-1}^q) \;+\; 2 \w_{r-1}^q\\[0.5ex]
   &&   \;+\; \w_{p^2-1}^{qr} \;+\; 2 \w_{r-1}^{pq}  
        \;+\; \w_{r-1}^{p^2q} \;+\; 2 \w_{q-1}^p  \;+\; 3 \w_{p-1}^q \;+\; 2 \w_{r-1}^p.
\end{eqnarray*}
  \end{myitemize}
  
  \end{ithm}
\end{theorem}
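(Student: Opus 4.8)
The plan is to prove Theorem~\ref{thp2q} by classifying, for each factorisation type in $\mathcal O$, the isomorphism classes of groups of the given order via their Sylow structure and a Fitting/Hall-subgroup analysis, and then summing the counts. For a group $G$ of order $p^aq^b$ (or $p^2qr$), one starts from the fact that $G$ is solvable (Burnside's $p^aq^b$-theorem, and trivially so when there are three or four prime factors counted with multiplicity), hence has a nontrivial Fitting subgroup $F(G)$, and—since the orders here involve very small exponents—$G$ has a normal Sylow subgroup for all but finitely many congruence classes of the primes. The generic case is therefore a split extension $N\rtimes H$ where $N$ is a normal Sylow (or Hall) subgroup and $H$ acts on it; the number of such extensions up to isomorphism is counted by orbits of $\mathrm{Aut}(N)\times\mathrm{Aut}(H)$ on the relevant set of homomorphisms $H\to\mathrm{Aut}(N)$, equivalently by conjugacy classes of subgroups of $\mathrm{Aut}(N)$ isomorphic to a quotient of $H$, weighted appropriately. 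I would organise the proof exactly along the enumeration in Section~\ref{secprel} (which the excerpt says proves the enumeration and whose arguments transfer verbatim to the determination), treating the four items (a)--(d) in turn.

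For each item the key steps are: (i) reduce to the case of a normal Sylow $p$-subgroup $P$ (respectively a normal Hall subgroup), handling separately the finitely many exceptional primes for which this fails—these are precisely the listed special cases such as $\GN(2^3\cdot 3)=15$, $\GN(2^3\cdot 7)=13$, $\GN(3^2\cdot 2^2)=14$, $\GN(2^2\cdot 3\cdot 5)=13$, and the $q=2$ or $p=2$ rows—which one disposes of by a finite check; (ii) enumerate the possibilities for $P$ itself (for $p^3$ there are the five isomorphism types, for $p^2$ the two, for $q^2$ likewise, etc.), and in each case describe $\mathrm{Aut}(P)$ explicitly enough to count its conjugacy classes of cyclic subgroups of the orders dividing the complement's order; (iii) translate divisibility conditions like $q\mid p-1$, $q\mid p+1$, $q\mid p^2+p+1$ into the existence of elements of order $q$ in the appropriate automorphism groups (e.g.\ $\mathrm{GL}_1$, $\mathrm{GL}_2$, $\mathrm{GL}_3$ over $\F_p$, or $\mathrm{Aut}$ of the extraspecial and other nonabelian $p$-groups of order $p^3$), which is where the divisibility Kronecker deltas $\w$ enter and where the PORC-type polynomial coefficients (like $\tfrac{q^2+13q+36}{6}$) come from counting orbits of the action; (iv) add up the contributions, being careful about the overlaps (when several divisibility conditions hold simultaneously, as encoded by the products $\w_{p-1}^q\w_{r-1}^q$ etc.\ and the inclusion–exclusion-like correction terms such as $(1-\w_{p-1}^q)$).

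The main obstacle is item (d), order $p^2qr$ with $q<r$ and $q>2$: here the Hall $\{q,r\}$-subgroup need not be normal, $P$ of order $p^2$ may be elementary abelian or cyclic, and the action of the (possibly nonabelian) Hall subgroup on $P$ couples the two primes $q$ and $r$, so that one must count orbits of $\mathrm{Aut}(\Z_q\times\Z_r)\cong\Z_{q-1}\times\Z_{r-1}$ (and of extensions $\Z_r\rtimes\Z_q$) on homomorphisms into $\mathrm{GL}_2(\F_p)$ or $\mathrm{GL}_1(\F_{p^2})$, carefully splitting into the subcases recorded by the many monomials in the displayed formula (whether $q$ or $r$ divides $p\mp1$, whether $r$ divides $pq$, $p^2q$, etc.). This bookkeeping—matching every term of the polynomial to a family of groups and checking no isomorphism is counted twice—is the laborious heart of the argument; the conceptual tools (Sylow theory, Fitting subgroup, extension theory, and counting $\mathrm{Aut}$-orbits) are all standard and already implicitly available from the enumeration proof, so I would reuse that scaffolding and focus the write-up on the case distinctions and the orbit counts that yield the stated coefficients.
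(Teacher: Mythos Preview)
Your plan is essentially the paper's own strategy: Fitting-subgroup case distinction, reduction to split extensions via Schur--Zassenhaus, and counting $\Aut(U)\times\Aut(N)$-orbits on $\Hom(U,\Aut(N))$ (the paper's Proposition~\ref{Taunt}), with the linear-group subgroup counts of Proposition~\ref{linear2} supplying the coefficients. So at the level of method there is nothing to add.

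There is, however, one genuine slip. You assert that a group of order $p^2qr$ is ``trivially'' solvable. This is false: Burnside's $p^aq^b$-theorem does not apply to three distinct primes, and indeed $A_5$ has order $2^2\cdot 3\cdot 5$. The paper handles this explicitly at the start of the proof of part~(d), invoking the classification of simple groups of cube-free order to see that $A_5$ is the \emph{only} non-solvable group of order $p^2qr$; this is precisely why $(p,q,r)=(2,3,5)$ is singled out. Your plan to dispatch the special cases ``by a finite check'' would presumably still catch $A_5$ in the count, but the reasoning you give for why the generic case is a split extension over a normal Sylow or Hall subgroup is not valid as stated and needs this correction.

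A second, organisational point: for (d) the paper does not primarily reduce to a normal Sylow subgroup but instead runs the case distinction directly on $|F|$ (one, two, or three prime factors), which cleanly separates the contributions $c_1,\ldots,c_8$ and avoids having to argue about which Sylow subgroup is normal in each configuration. In particular the case $F\cong C_{pr}$ is delicate (it involves a non-split extension counted via Lemma~\ref{cubefree}) and does not fit the ``normal Sylow'' template you describe. Your sketch would eventually have to rediscover this subdivision, so it is worth adopting the Fitting-first organisation from the outset.
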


\subsection{Presentations}\label{secpres} The next theorem follows from our proof of Theorem \ref{thp2q} and is discussed in Section~\ref{secconst}.

\begin{theorem}
Presentations for the groups of order $n\in\mathcal{O}$, up to isomorphism, are given in Tables \ref{tablep2q}--\ref{tabp2qr}. The groups listed in the left column only exist if the number in the right column is greater than $0$. 
\end{theorem}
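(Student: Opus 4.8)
The plan is to read the presentations off the case analysis that establishes Theorem~\ref{thp2q}, since that argument already produces, for every isomorphism type, an explicit description of $G$ as an extension together with the relevant automorphisms. The first step is to separate the finitely many small-prime exceptions recorded in Theorem~\ref{thp2q} --- in particular the nonsolvable group $A_5$, which has order $2^2\cdot3\cdot5$ --- and to treat those by hand. For all remaining groups $G$ of order $n\in\mathcal O$ one uses that $G$ is solvable, so Hall's theorems express $G$ as an iterated split extension whose factors are a Sylow $p$-subgroup $P$ (of order $p^2$ or, for $n=p^3q$, of order $p^3$) and cyclic groups of order $q$, $r$ or $qr$. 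The possible $P$ are exactly $C_{p^2}$, $C_p\times C_p$ and, in the order-$p^3$ case, additionally $C_{p^3}$, $C_{p^2}\times C_p$, $C_p^3$ and the two nonabelian groups of order $p^3$; since the automorphism groups $\Aut(C_{p^k})\cong C_{p^{k-1}(p-1)}$, $\Aut(C_p\times C_p)\cong\GL_2(p)$, and so on, are completely understood, each isomorphism type of $G$ corresponds to an equivalence class of homomorphisms from a cyclic group of order $q$, $r$ or $qr$ into the automorphism group of the normal part.

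Next I would run the Sylow/Hall bookkeeping order by order --- for $p^2q$, $p^3q$, $p^2q^2$ and $p^2qr$ --- listing the possible patterns of which Sylow subgroups are normal and which act nontrivially on which, exactly as in the proof of Theorem~\ref{thp2q}. For each pattern a representative group is obtained by choosing one homomorphism from each orbit: for a cyclic building block this means picking, for each divisibility condition, a generator of the unique subgroup of the required order inside the cyclic automorphism group --- and these choices are precisely what the divisibility Kronecker deltas $\w$ count --- while for $P\cong C_p\times C_p$ it means picking a representative of the required order in $\GL_2(p)$ (a scalar, a regular split-torus element, or a non-split Singer element), whose centraliser then governs how any further coprime automorphism may act. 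Once such representatives are fixed, each presentation in Tables~\ref{tablep2q}--\ref{tabp2qr} is written down mechanically: concatenate the standard presentations of the factors and adjoin the conjugation relations prescribed by the chosen automorphisms. The entry in the right-hand column of each table row is by construction the corresponding summand of the formula in Theorem~\ref{thp2q}, which is positive exactly when the divisor conditions needed for the relevant automorphism to exist are satisfied; hence the group in the left column exists if and only if that number exceeds $0$.

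It then remains to check that the list is complete and irredundant. Completeness is immediate from Theorem~\ref{thp2q}: the construction yields exactly $\GN(n)$ presentations for each $n$, so no isomorphism type is missing. For irredundancy I would compute a few cheap invariants directly from the presentations --- the isomorphism type of $G/G'$, of $Z(G)$ and of the Fitting subgroup, the multiset of Sylow subgroup structures, and the simultaneous conjugacy type of the induced action on each normal Sylow subgroup --- which already distinguish all groups listed under a fixed $n$, mirroring the equivalence relation used in the orbit count.

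I expect the order $p^2qr$ to be the main obstacle. There a Sylow $p$-subgroup isomorphic to $C_p\times C_p$ can be acted on by two commuting coprime automorphisms, one of order dividing $q$ and one of order dividing $r$, and one must classify such pairs in $\GL_2(p)$ up to simultaneous conjugacy: depending on whether the first element is scalar, regular split, or Singer, the second ranges over $\GL_2(p)$, a split torus $C_{p-1}\times C_{p-1}$, or a non-split torus $C_{p^2-1}$, and these sub-cases together produce the long formula in Theorem~\ref{thp2q}(d). Keeping this sub-division --- together with the exceptional configurations around $A_5$ and the small-prime coincidences for $q\in\{2,3\}$ or $p=2$ --- in step with the explicit tables is the delicate part; the rest is routine extension bookkeeping.
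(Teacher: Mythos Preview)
Your approach is essentially the paper's: the statement is asserted to follow from the case analysis proving Theorem~\ref{thp2q}, and you likewise propose to read the presentations off that analysis by fixing orbit representatives of the relevant action homomorphisms. So the overall strategy is correct and matches the paper.

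That said, your organising principle---``which Sylow subgroups are normal and which act on which''---is coarser than what the paper actually uses, and for $n=p^2qr$ this matters. The tables are clustered by the isomorphism type of the Fitting subgroup $F$, not merely by the pattern of normal Sylow subgroups. For instance, Clusters~2 and~7 of Table~\ref{tabp2qr} both have only the Sylow $r$-subgroup normal in $G$, yet they are distinguished by $|F|=r$ versus $|F|=pr$; your Sylow-normality bookkeeping alone would conflate them. More substantively, in the $|F|=pr$ cluster the extension of $G/F\cong C_{pq}$ by $F\cong C_{pr}$ is \emph{not} covered by Schur--Zassenhaus (the orders are not coprime), and the paper has to invoke a separate lemma on non-split extensions (Lemma~\ref{cubefree}) to see that exactly two isomorphism types arise when $C_q$ acts trivially on the $C_p$ inside $F$. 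Your sketch, which frames everything as ``iterated split extension with a Sylow $p$-subgroup as one factor and cyclic pieces of order $q$, $r$, $qr$ on the other side'', does not obviously accommodate this case: the normal piece here is $C_{pr}$, not a Sylow subgroup, and the splitting has to be taken over $C_r$ first and then the residual order-$p^2q$ group analysed. This is not fatal---the groups are still iterated split extensions, just not along the decomposition you describe---but you should be aware that the $p^2qr$ analysis requires the finer Fitting-subgroup stratification and one genuinely non-split step.
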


Throughout the paper, $p,q,r$ denote distinct primes, and $C_n^m$ denotes the $m$-fold direct product of the cyclic group $C_n$ of order  $n$. We write $\Z_n=\{0,\ldots,n-1\}$ for the ring of integers modulo $n$, and $\Z_n^\ast$ for its group of units.   The general linear group of $n\times n$ matrices over the field $\Z_p$ is denoted $\GL_n(p)$; we write $\diag(a_1,\ldots,a_n)$ for the diagonal matrix with diagonal entries $a_1,\ldots,a_n$.

\begin{notation}\label{not1}
  All solvable groups in Tables \ref{tablep2q}--\ref{tabp2qr} are specified by a polycyclic presentation, with the following convention: first, we only list the power relations and non-trivial commutator relations, so that the set of generators is implicitly defined by the power relations: e.g.\ $\{a^{p},b^p\}$ defines $\langle a,b\mid a^p, b^p, a^b=a\rangle\cong C_p^2$. Secondly, we abbreviate a relation $a=b$ by $a/b$, and if $a$ acts on $b,c\in C_p^2$ via a matrix $M$, then we specify these conjugacy relations by $(b^a,c^a)/(b,c)^M$. For example, $b^a=c$ and $c^a=b^2c^{3}$ is denoted by \[(b^a,c^a)/(b,c)^M\] where $M=\left(\begin{smallmatrix}0&1\\2&3\end{smallmatrix}\right)$. The remaining notation is as follows.

\medskip    
    
\begin{myitemize}
\item Throughout, $P$ is a Sylow $p$-subgroup, $Q$ is a Sylow $q$-subgroup, and $F$ is the Fitting subgroup.      
\item If $\Z_a^\ast$ is cyclic, then the \emph{canonical generator} $\sigma_a\in \Z_a^\ast$ is the smallest generator in $\{1,\ldots,a-1\}$. If $b$ divides $\phi(a)=|\Z_a^\ast|$ and $k$ is an integer, then we define \[\rho(a,b,k)=\sigma_a^{k\phi(a)/b}\quad\text{and}\quad \rho(a,b)=\rho(a,b,1).\]
\item For $k\in\{2,3\}$ fix a generator $\tau\in {\rm GF}(p^k)^\ast$. If $c\mid (p^k-1)$ but $c\nmid (p^{k-1}-1)$, then let $\gamma=\tau^{(p^k-1)/c}$ and define $\Irr_k(p,c)\in\GL_k(p)$ to be the companion matrix of the characteristic polynomial of $\diag(\gamma,\gamma^p,\ldots,\gamma^{p^{k-1}})$; this matrix generates a subgroup of order $c$ of a Singer cycle in $\GL_k(p)$. For example, if $p=5$, $k=2$, and $c=3$, then $I_2(5,3)=\left(\begin{smallmatrix}0&4\\1&4\end{smallmatrix}\right)$. 
\item If $c\mid (p-1)$  such that $\Z_c^\ast$ is cyclic, then for an integer $k$ we define 
  \[M(p,c,k)=\diag(\sigma_p^{(p-1)/c},\sigma_p^{k(p-1)/c})\quad\text{and}\quad M(p,c)=M(p,c,1).\]
\end{myitemize}
\end{notation}

\noindent For example, if $p=7$ and $q=3$, then, using the above conventions, $\{a^q, b^{p^2}, c^p, b^a / b^{\rho(p^2,q)}\}$ encodes the group
\[\langle a,b, c \mid a^3, b^{49}, c^7, b^a = b^{30}, c^a=c, c^b=c \rangle;\]
note that $\sigma_{7^2}=3$, so $\rho(p^2,q)=\sigma_{7^2}^{\phi(7^2)/3}=3^{14}\bmod 49=30$.

\enlargethispage{1ex}

\vspace*{0.5cm}

\newcommand{\mycol}{\cellcolor{lightgray!45}}
\newcommand{\myrcol}{\rowcolor{lightgray!45}}

{\footnotesize
  \renewcommand\arraystretch{1.1}
\begin{longtable}[ht]{lcr}  \toprule
  \multicolumn{3}{c}{\bf Groups of order $p^2q$  using Notation \ref{not1}.}\\
\toprule 
PC-relators                                            & Parameters                    & Number of groups                      \\ 
\hline
\multicolumn{3}{l}{{\mycol}{\bf Cluster 1:} nilpotent}                                                               \\
$a^{p^2q}$                                             &                               & $1$                                  \\
$a^{pq}, b^p$                                          &                               & $1$                                  \\ 
\hline
\multicolumn{3}{l}{{\mycol}{\bf Cluster 2:} non-nilpotent, normal  $P=C_p^2$ }                             \\
$a^q, b^p, c^p, b^a / b^{\rho(p,q)}$                   &                               & $\myw{p - 1}{q}$                     \\
$a^q, b^p, c^p, (b^a, c^a) / (b, c)^{M(p, q, \sigma_q^k)}$\hspace*{8ex}   & $0 \le k \le \frac12(q - 1)$\hspace*{8ex}   & $\frac12(q + 1 - \mydelta{2}{q})\myw{p - 1}{q}$       \\ 
$a^q, b^p, c^p, (b^a, c^a) / (b, c)^{\Irr_2(p,q)}$     &                               & $(1 - \mydelta{2}{q})\myw{p + 1}{q}$ \\
\hline
\multicolumn{3}{l}{{\mycol}{\bf Cluster 3:} non-nilpotent, normal $P=C_{p^2}$ }                          \\
$a^q, b^{p^2}, b^a / b^{\rho(p^2,q)}$                  &                               & $\myw{p - 1}{q}$                    \\
\hline
\multicolumn{3}{l}{{\mycol}{\bf Cluster 4:} non-nilpotent, normal $Q$ with complement  $P=C_p^2$  }                                                   \\
$a^p, b^p, c^q, c^a / c^{\rho(q, p)}$                  &                               & $\myw{q - 1}{p}$                    \\
\hline
\multicolumn{3}{l}{{\mycol}{\bf Cluster 5:} non-nilpotent, normal $Q$ with complement $P=C_{p^2}$  }                                                   \\
$a^{p^2}, b^q, b^a / b^{\rho(q, p)}$                   &                               & $\myw{q - 1}{p}$                    \\
$a^{p^2}, b^q, b^a / b^{\rho(q, p^2)}$                 &                               & $\myw{q - 1}{p^2}$                  \\
\bottomrule
\caption{Groups of order $p^2q$.}\label{tablep2q}
\end{longtable}

\pagebreak

\enlargethispage{1ex}
\renewcommand\arraystretch{1.1}
\begin{longtable}{lcr} 
 \toprule
  \multicolumn{3}{c}{\bf Groups of order $p^3q$, using Notation \ref{not1}.}                                  \\
\toprule 
PC-relators                                                        & Parameters       & Number of groups                                \\ 
\hline
\multicolumn{3}{l}{{\mycol}{\bf Cluster 1:} nilpotent}                                                                         \\
$a^{p^3q}$                                                         &                  & $1$                                             \\
$a^{p^2}, b^{pq}$                                                  &                  & $1$                                             \\
$a^p, b^p, c^{pq}$                                                 &                  & $1$                                             \\ 
                                                           
$a^p, b^p, c^p, d^q, b^a / bc$                                     &                  & $1-\Delta_p^2$                                             \\
$a^p / c, b^p, c^p, d^q, b^a / bc$                                 &                  & $1-\Delta_p^2$                                             \\ 
                                                
$a^2, b^4, c^q, b^a / b^3$                                              &                  & $\Delta_p^2$                                             \\
$a^2 / b^2, b^4, c^q, b^a / b^3$                                        &                  & $\Delta_p^2$                                             \\ 
\hline
\multicolumn{3}{l}{{\mycol}{\bf Cluster 2:} non-nilpotent, normal $Q$ with complement $P =C_{p^3}$}                                    \\
$a^{p^3}, b^q, b^a / b^{\rho(q, p)}$                               &                  & $\myw{q - 1}{p}$                               \\
$a^{p^3}, b^q, b^a / b^{\rho(q, p^2)}$                             &                  & $\myw{q - 1}{p^2}$                             \\
$a^{p^3}, b^q, b^a / b^{\rho(q, p^3)}$                             &                  & $\myw{q - 1}{p^3}$                             \\ 
\hline
\multicolumn{3}{l}{{\mycol}{\bf Cluster 3:} non-nilpotent, normal $Q$ with complement $P=C_{p^2} \times C_p$ }                         \\
$a^{p^2}, b^p, c^q, c^b / c^{\rho(q, p)}$                          &                  & $\myw{q - 1}{p}$                               \\
$a^{p^2}, b^p, c^q, c^a / c^{\rho(q, p)}$                          &                  & $\myw{q - 1}{p}$                               \\
$a^{p^2}, b^p, c^q, c^a / c^{\rho(q, p^2)}$                        &                  & $\myw{q - 1}{p^2}$                             \\ 
\hline
\multicolumn{3}{l}{{\mycol}{\bf Cluster 4:} non-nilpotent, normal $Q$ with complement $P =C_p^3$ }                                      \\
$a^p, b^p, c^p, d^q, d^a / d^{\rho(q, p)}$                         &                  & $\myw{q - 1}{p}$                               \\ 
\hline
\multicolumn{3}{l}{{\mycol}{\bf Cluster 5:} non-nilpotent, normal $Q$ with complement $P=p_+^{1+2}$ or $P= D_8$}                                   \\
$a^p, b^p, c^p, d^q, c^a / bc, d^a / d^{\rho(q, p)}$               &                  & $\myw{q - 1}{p}$                               \\
$a^2, b^4, c^q, b^a / b^3, c^a / c^{-1}$                           &                  & $\mydelta{2}{p}$                               \\ 
\hline 
\multicolumn{3}{l}{{\mycol}{\bf Cluster 6:} non-nilpotent, normal $Q$ with complement $P=p_-^{1+2}$ or $P=Q_8$}                            \\
$a^p, b^{p^2}, c^q, b^a / b^{p+1}, c^a / c^{\rho(q, p, k)}$        & $k \in \Z_p^*$   & $(1-\Delta_p^2)(p - 1)\myw{q - 1}{p}$          \\ 
$a^p, b^{p^2}, c^q, b^a / b^{p+1}, c^b / c^{\rho(q, p)}$           &                  & $(1-\Delta_p^2)\myw{q - 1}{p}$                 \\
                                          
$a^2 / b^2, b^4, b^a / b^3, c^q, c^a / c^{- 1}$                     &                  & $\Delta_p^2$                                   \\
\hline
\myrcol
\multicolumn{3}{l}{{\bf Cluster 7:} non-nilpotent, normal $P = C_{p^3}$ }                                                                                                   \\
$a^q, b^{p^3}, b^a / b^{\rho(p^3, q)}$                                                          &                               & $\myw{p - 1}{q}$                                                    \\ 
\hline
\myrcol
\multicolumn{3}{l}{{\bf Cluster 8:} non-nilpotent,  normal $P =C_{p^2} \times C_p$ }                                                                                        \\
$a^q, b^{p^2}, c^p, c^a / c^{\rho(p, q)}$                                                       &                               & $\myw{p - 1}{q}$                                                    \\
$a^q, b^{p^2}, c^p, b^a / b^{\rho(p^2, q)}$                                                     &                               & $\myw{p - 1}{q}$                                                    \\
$a^q, b^{p^2}, c^p, b^a / b^{\rho(p^2, q)}, c^a / c^{\rho(p, q, k)}$                            & $k \in \Z_q^*$               & $(q - 1)\myw{p - 1}{q}$                                             \\ 
\hline
\myrcol
\multicolumn{3}{l}{{\bf Cluster 9:} non-nilpotent, normal $P = C_p^3$ }                                                                                                     \\
$a^q, b^p, c^p, d^p, b^a / b^{\rho(p, q)}$                                                      &                               & $\myw{p - 1}{q}$                                                    \\
$a^q, b^p, c^p, d^p, b^a / b^{\rho(p, q)}, c^a / c^{\rho(p, q, \sigma_q^k)}$                      & $0 \le k \le \frac12(q-1)$   & $\frac12(q + 1 - \Delta_q^2) \myw{p - 1}{q}$                                     \\
$a^q, b^p, c^p, d^p, b^a / b^{\rho(p, q)}, c^a / c^{\rho(p, q)}, d^a / d^{\rho(p, q, k)}$& $k \in \Z_q^*$           & $(q - 1)\myw{p - 1}{q}$            \\
$a^q, b^p, c^p, d^p, b^a / b^{\rho(p, q)}, c^a / c^{\rho(p, q, \sigma_q^{k})}, d^a / d^{\rho(p, q, \sigma_q^\ell)}$ & $(k, \ell) \in \mathcal{P}$       & $\frac{1}{6}(q^2 - 5q + 6 + 4 \myw{q-1}{3})\myw{p - 1}{q}$           \\
$a^q, b^p, c^p, d^p, (b^a, c^a) / (b, c)^{\Irr_2(p, q)}$                               &                               & $(1 - \mydelta{2}{q})\myw{p + 1}{q}$                                   \\
$a^q, b^p, c^p, d^p, (b^a, c^a, d^a) / (b, c, d)^{\Irr_3(p, q)}$                       &                               & $(1 - \mydelta{2}{q})(1 - \mydelta{3}{q})\myw{p ^2 + p + 1}{q}$           \\ 
\hline
\myrcol
\multicolumn{3}{l}{{\bf Cluster 10:} non-nilpotent, normal $P = p_+^{1+2}$ }                                                                                                 \\
$a^q, b^p, c^p, d^p, c^b / cd, b^a / b^{{\rho(p, q, q - 1)}}, c^a / c^{\rho(p, q)}$              &                              & $\myw{p - 1}{q}$                                                    \\
$a^q, b^p, c^p, d^p, c^b / cd, b^a / b^{\rho(p, q)}, d^a / d^{\rho(p, q)}$                       &                              & $\myw{p - 1}{q}$                                                    \\
$a^q, b^p, c^p, d^p, c^b / cd, b^a / b^{{\rho(p, q, k)}}, c^a / c^{{\rho(p, q, q+1-k)}}, d^a/d^{{\rho(p,q)}}$         & $2 \le k \le \frac12(q + 1)$ & $\frac12(q - 1 - \mydelta{2}{q})\myw{p - 1}{q}$                    \\
$a^q, b^p, c^p, d^p, c^b / cd, (b^a, c^a) / (b, c)^{\Irr_2(p, q)}$                               &                              & $(1 - \mydelta{2}{q})(1-\mydelta{2}{p})\myw{p + 1}{q}$                                   \\
\hline
\myrcol
\multicolumn{3}{l}{{\bf Cluster 11:} non-nilpotent, normal $P = p_-^{1+2}$ or normal $P= Q_8$}                                                                                                 \\
$a^q, b^p, c^{p^2}, c^a / c^{\rho(p^2, q)}, c^b / c^{p+1}$.                                      &                              & $\myw{p - 1}{q}$                                                    \\
$a^3, b^2 / c^2, c^4, c^b / c^3, b^a / c, c^a / bc$                                              &                              & $\mydelta{2}{p}\mydelta{3}{q}$                                            \\ 
\hline
\myrcol
\multicolumn{3}{l}{{\bf Cluster 12:} no normal Sylow subgroups}  \\
$a^2, b^3, c^2, d^2, b^a / b^2, c^a / d, c^b / d, d^a / c, d^b / cd$                             &                               & $\myw{p}{2}\myw{q}{3}$\\
\bottomrule
\myrcol              {\bf Parameter sets}                                                 &                              &                                                                     \\
\multicolumn{3}{l}{$\mathcal{P} =\begin{cases} \{(x, y) \in \Z_{q - 1}^2 \;:\; 1 \le x \le \frac13(q - 2),\;\; 2x \le y \le q - 2 - x\} & (q \equiv 2 \bmod 3)\\\{(x, y) \in \Z_{q - 1}^2 \;:\; 1 \le x \le \frac13(q - 1),\;\; 2x \le y \le q - 2 - x\} \cup \{(\frac13(q - 1), \frac23(q - 1))\} & (q \equiv 1 \bmod 3).
  \end{cases}$}    \\
\bottomrule\caption{Groups of order $p^3q$. }
\label{p3q_notQnormal}
\end{longtable}

\pagebreak

\renewcommand\arraystretch{1.1}

\vspace*{-2ex}

\begin{longtable}{lcr} 
  \toprule
  \multicolumn{3}{c}{\bf Groups of order $p^2q^2$ with $p>q$, using Notation \ref{not1}.}\\
\toprule  
PC-relators                                                        & Parameters                     & Number of groups                      \\ 
\hline
\myrcol
\multicolumn{3}{l}{{\bf Cluster 1:} nilpotent }                                                                                \\
$a^{p^2q^2}$                                                       &                                   & 1                                              \\
$a^p, b^{pq^2}$                                                    &                                   & 1                                              \\
$a^{p^2q}, b^q$                                                    &                                   & 1                                              \\
$a^{pq}, b^{pq}$                                                   &                                   & 1                                              \\ 
\hline
\myrcol
\multicolumn{3}{l}{{\bf Cluster 2:} non-nilpotent, normal $P=C_{p^2}$ with complement $Q=C_{q^2}$}                                                                                      \\
$a^{q^2}, b^{p^2}, b^a / b^{\rho(p^2, q)}$                                  &                                & $\myw{p - 1}{q}$                     \\
$a^{q^2}, b^{p^2}, b^a / b^{\rho(p^2, q^2)}$                                &                                & $\myw{p - 1}{q^2}$            \\
\hline
\myrcol
\multicolumn{3}{l}{{\bf Cluster 3:} non-nilpotent,  normal $P=C_{p^2}$ with complement $Q=C_{q}^2$}                                                                                        \\
$a^q, b^q, c^{p^2}, c^a / c^{\rho(p^2, q)}$                                 &                                & $\myw{p - 1}{q}$                     \\
\hline
\myrcol
\multicolumn{3}{l}{{\bf Cluster 4:} non-nilpotent, normal $P=C_{p}^2$ with complement $Q=C_{q^2}$, or $(p,q)=(3,2)$}                                                                                  \\
$a^{q^2}, b^p, c^p, b^a / b^{\rho(p, q)}$                                   &                                & $\myw{p - 1}{q}$                     \\
$a^{q^2}, b^p, c^p, (b^a, c^a) / (b, c)^{M(p, q, \sigma_q^k)}$ \hspace*{8ex}           & $0 \le k \le \frac12(q - 1 )$\hspace*{8ex}   & $\frac12(q + 1-\myw{q}{2})\myw{p - 1}{q}$    \\
$a^{q^2}, b^p, c^p, b^a / b^{\rho(p, q^2)}$                                 &                                & $\myw{p - 1}{q^2}$             \\
$a^{q^2}, b^p, c^p, (b^a, c^a) / (b, c)^{M(p, q^2, \sigma_{q^2}^k)}$      & $0 \le k \le \frac12(q^2 - q)$ & $\frac12(q^2 - q + 2)\myw{p - 1}{q^2}$\\
$a^{q^2}, b^p, c^p, (b^a, c^a) / (b, c)^{M(p, q^2, kq)}$                  & $k \in \Z_q^*$                 & $(q - 1)\myw{p - 1}{q^2}$             \\
$a^9, b^2, c^2, b^a / c, c^a / bc$                                 &                                   & $\myw{p}{3}\myw{q}{2}$                                \\
$a^{q^2}, b^p, c^p, (b^a, c^a) / (b, c)^{\Irr_2(p, q)}$                     &                                & $(1 - \myw{q}{2})\myw{p + 1}{q}$                      \\
$a^{q^2}, b^p, c^p, (b^a, c^a) / (b, c)^{\Irr_2(p, q^2)}$                   &                                & $\myw{p + 1}{q^2}$             \\
\hline
\myrcol
\multicolumn{3}{l}{{\bf Cluster 5:} non-nilpotent,   normal $P=C_{p}^2$ with complement $Q=C_{q}^2$, or $(p,q)=(3,2)$}                                                                               \\
$a^q, b^q, c^p, d^p, c^a / c^{\rho(p, q)}$                                  &                                & $\myw{p - 1}{q}$                      \\
$a^q, b^q, c^p, d^p, (c^a, d^a) / (c, d)^{M(p, q, \sigma_q^k)}$           & $0 \le k \le \frac12(q - 1)$   & $\frac12(q + 1-\myw{q}{2})\myw{p - 1}{q}$    \\
$a^q, b^q, c^p, d^p, c^a / c^{\rho(p, q)}, d^b / d^{\rho(p, q)}$            &                                & $\myw{p - 1}{q}$                      \\
$a^3, b^3, c^2, d^2, c^a / d, d^a / cd$                            &                                   & $\myw{p}{3}\myw{q}{2}$                                 \\
$a^q, b^q, c^p, d^p, (c^a, d^a) / (c, d)^{\Irr_2(p, q)}$                    &                                & $(1 - \myw{q}{2})\myw{p + 1}{q}$                      \\
\bottomrule\caption{Groups of order $p^2q^2$ with $p>q$.}\label{tabp2q2}
\end{longtable}

\vspace*{-3ex}

\enlargethispage{6ex} 

\renewcommand\arraystretch{1.1}

\begin{longtable}[ht]{lcr}
  \toprule
  \multicolumn{3}{c}{\bf Groups of order $p^2qr$ with $r>q$, using Notation \ref{not1}.}\\
\toprule  
PC-relators                                                                                                                                                & Parameters                    & Number of groups                                \\ 
\hline
\myrcol\multicolumn{3}{l}{{\bf Cluster 1:} $F = G$}                                      \\
$a^{p^2qr}$                        &                               & 1        \\
$a^p, b^{pqr}$            &                               & 1    \\
\hline
\myrcol\multicolumn{3}{l}{{\bf Cluster 2:} $|F| = r$ }                                                                                                                                                                                \\
$a^{p^2q}, b^r, b^a / b^{\rho(r, p^2q)}$                                                                                                                    &                               & $\myw{r- 1}{p^2q}$                          \\\hline
\myrcol\multicolumn{3}{l}{{\bf Cluster 3:} $|F| = qr$ }                                                                                                                                                                               \\
$a^{p^2}, b^q, c^r, b^a / b^{\rho(q, p^2)}$      &                               & $\myw{q - 1}{p^2}$                             \\
$a^{p^2}, b^q, c^r, b^a / b^{\rho(q, p^2)}, c^a / c^{\rho(r, p, k)}$    & $k \in \Z_p^*$                & $(p - 1)\myw{q- 1}{p^2}\myw{r - 1}{p}$         \\
$a^{p^2}, b^q, c^r, b^a / b^{\rho(q, p^2)}, c^a / c^{\rho(r, p^2, k)}$   & $k \in \Z^*_{p^2}$            & $(p^2 - p) \myw{r - 1}{p^2}\myw{q - 1}{p^2}$   \\
$a^{p^2}, b^q, c^r, c^a / c^{\rho(r, p^2)}$                                                                                                                 &                               & $\myw{r - 1}{p^2}$                             \\
$a^{p^2}, b^q, c^r, b^a / b^{\rho(q, p)}, c^a / c^{\rho(r, p^2, k)},$                                                                                       & $k \in \Z_p^*$                & $(p - 1)\myw{r- 1}{p^2}\myw{q- 1}{p}$          \\
$a^p, b^p, c^q, d^r, c^a / c^{\rho(q, p)}, d^b / d^{\rho(r, p)}$                                                                                            &                               & $\myw{q - 1}{p}\myw{r - 1}{p}$                 \\
\hline
\myrcol\multicolumn{3}{l}{{\bf Cluster 4:} $|F| = p^2$ }                                                                                                                                                                              \\
$a^{qr}, b^{p^2}, b^a / b^{\rho(p^2, qr)}$                                                                                                                  &                               & $\myw{p - 1}{qr}$                              \\
$a^q, b^r, c^p, d^p, c^a / c^{\rho(p, q)}, d^b / d^{\rho(p, r)}$                                                                                            &                               & $\myw{p - 1}{qr}$                              \\
$a^q, b^r, c^p, d^p, c^a / c^{\rho(p, q)}, c^b / c^{\rho(p, r)}$                                                                                            &                               & $\myw{p - 1}{qr}$                              \\
$a^q, b^r, c^p, d^p, (c^a, d^a) / (c, d)^{M(p, q, k)}, c^b / c^{\rho(p, r)}$                                                                              & $k \in \Z_q^*$                & $(q - 1)\myw{p - 1}{qr}$                       \\ 
$a^q, b^r, c^p, d^p, c^a / c^{\rho(p, q)}, (c^b, d^b) / (c, d)^{M(p, r, k)}$                                                                              & $k \in \Z_r^*$                & $(r - 1)\myw{p - 1}{qr}$                       \\
$a^q, b^r, c^p, d^p, (c^a, d^a) / (c, d)^{M(p, q, \sigma_q^k)}, (c^b, d^b) / (c, d)^{M(p, r, \sigma_r^\ell)}$                                           & $(k, \ell) \in \mathcal{P}_1$          & $\frac12(qr - q - r + 5 - 2\mydelta{2}{q})\myw{p - 1}{qr}$       \\
$a^2, b^r, c^p, d^p, b^a / b^{-1}, (c^a, d^a) / (d, c), (c^b, d^b) / (c, d)^{M(p, r, r - 1)}$                                                             &                               & $\mydelta{2}{q}\myw{p-1}{r}$                   \\
$a^{qr}, b^p, c^p, (b^a, c^a) / (b, c)^{\Irr_2(p, qr)}$                                                                                                     &                               & $(1 - \mydelta{2}{q})\myw{p+1}{qr}$            \\
$a^q, b^r, c^p, d^p, (c^a, d^a) / (c, d)^{M(p, q)}, (c^b, d^b) / (c, d)^{\Irr_2(p, r)}$                                                                   &                               & $\myw{p - 1}{q}\myw{p + 1}{r}$                 \\
$a^2, b^r, c^p, d^p, b^a / b^{-1}, (c^a, d^a) / (d, c), (c^b, d^b) / (c, d)^{\Irr_2(p, r)}$                                                                 &                               & $\mydelta{2}{q}\myw{p+1}{r}$                   \\
$a^q, b^r, c^p, d^p, (c^a, d^a) / (c, d)^{\Irr_2(p, q)}, (c^b, d^b) / (c, d)^{M(p, r)}$                                                                   &                               & $(1 - \mydelta{2}{q})\myw{p+1}{q}\myw{p-1}{r}$ \\
\hline
\myrcol\multicolumn{3}{l}{{\bf Cluster 5:} $|F| = p^2q$ }\\
$a^r, b^{p^2}, c^q, b^a / b^{\rho(p^2, r)}$                                                                                                                 &                               & $\myw{p - 1}{r}$                               \\
$a^r, b^p, c^p, d^q, b^a / b^{\rho(p, r)}$                                                                                                                  &                               & $\myw{p - 1}{r}$                               \\
$a^r, b^p, c^p, d^q, (b^a, c^a) / (b, c)^{M(p, r, \sigma_r^k)}$                                                                                             & $0 \le k \le \frac12(r - 1)$  & $\frac12(r+1)\myw{p - 1}{r}$                 \\
$a^r, b^p, c^p, d^q, (b^a, c^a) / (b, c)^{\Irr_2(p, r)}$                                                                                                    &                               & $\myw{p+1}{r}$                                 \\
\hline
\myrcol
\multicolumn{3}{l}{{\bf Cluster 6:} $|F| = p^2r$ }                                                                                                                                                                                \\
$a^q, b^r, c^{p^2}, b^a / b^{\rho(r, q)}$                                                                              &                              & $ \myw{r - 1}{q}$                                                   \\
$a^q, b^r, c^{p^2}, c^a / c^{\rho(p^2, q)}$                                                                            &                              & $ \myw{p - 1}{q}$                                                   \\
$a^q, b^r, c^{p^2}, b^a / b^{\rho(r, q, k)}, c^a / c^{\rho(p^2, q)}$                                                   & $k \in \Z_q^*$               & $(q - 1) \myw{r - 1}{q} \myw{p - 1}{q}$                             \\
$a^q, b^r, c^p, d^p, c^a / c^{\rho(p, q)}$                                                                             &                              & $\myw{p - 1}{q}$     \\
$a^q, b^r, c^p, d^p, (c^a, d^a) / (c, d)^{M(p, q, \sigma_q^k)}$                                                      & $0 \le k \le \frac12(q - 1)$ & $\frac12(q + 1 - \mydelta{2}{q}) \myw{p - 1}{q}$             \\
$a^q, b^r, c^p, d^p, (c^a, d^a) / (c, d)^{\Irr_2(p, q)}$                                                               &                              & $ \myw{p + 1}{q}{\color{red}(1-\mydelta{2}{q})}$                                                   \\
$a^q, b^r, c^p, d^p, b^a / b^{\rho(r, q)}$                                                                             &                              & $ \myw{r - 1}{q}$                                                   \\
$a^q, b^r, c^p, d^p, b^a / b^{\rho(r, q)}, c^a / c^{\rho(p, q, k)}$                                                    & $k \in \Z_q^*$               & $(q - 1) \myw{r - 1}{q} \myw{p - 1}{q}$                             \\
$a^q, b^r, c^p, d^p, b^a / b^{\rho(r, q, \sigma_q^\ell)}, (c^a, d^a) / (c, d)^{M(p, q, \sigma_q^k)}$                     & $(k, \ell) \in \mathcal{P}_2$    & $\frac12q(q - 1 - \mydelta{2}{q}) \myw{r - 1}{q} \myw{p-1}{q}$      \\
$a^2, b^r, c^p, d^p, b^a / b^{-1}, c^a / c^{-1}, d^a / d^{-1}$                                                         &                              & $\mydelta{2}{q}$                                                       \\
$a^q, b^r, c^p, d^p, b^a / b^{\rho(r, q)}, (c^a, d^a) / (c, d)^{(\Irr_2(p, q)^k)}$                                      & $1 \le k \le \frac12(q - 1)$ & $\frac12(q - 1 - \mydelta{2}{q}) \myw{r - 1}{q} \myw{p + 1}{q}$                      \\
\hline
\myrcol
\multicolumn{3}{l}{{\bf Cluster 7:} $|F| = pr$ }                                                                                                                                                                                  \\
$a^q, b^p, c^p, d^r, d^a / d^{\rho(r, q)}, d^b / d^{\rho(r, p)}$                                                        &                             & $ \myw{r - 1}{pq}              $                                    \\
$a^q, b^{p^2}, c^r, c^a / c^{\rho(r, q)}, c^b / c^{\rho(r, p)}$                                                         &                             & $ \myw{r - 1}{pq}$                                                  \\
$a^q, b^p, c^p, d^r, c^a / {\color{red}c}^{\rho(p, q)}, d^b / d^{\rho(r, p)}$                                                        &                             & $ \myw{r - 1}{p} \myw{p - 1}{q}$                                    \\
$a^q, b^p, c^p, d^r, c^a / {\color{red}c}^{\rho(p, q, k)}, d^a / d^{\rho(r, q)}, d^b / d^{\rho(r, p)}$                               & $k \in \Z_q^*$              & $(q - 1) \myw{r - 1}{pq} \myw{p - 1}{q}$                            \\
\hline
\myrcol
\multicolumn{3}{l}{{\bf Cluster 8:} $|F| = pqr$ }                                                                                                                                                                                 \\
$a^{p^2}, b^q, c^r, c^a / c^{\rho(r, p)}$                                                                               &                             & $ \myw{r - 1}{p}$                                                   \\
$a^{p^2}, b^q, c^r, b^a / b^{\rho(q, p)}$                                                                               &                             & $ \myw{q - 1}{p}$                                                   \\
$a^{p^2}, b^q, c^r, b^a / b^{\rho(q, p)}, c^a / c^{\rho(r, p, k)}$                                                      & $k \in \Z_p^*$              & $(p - 1) \myw{r - 1}{p} \myw{q - 1}{p}$                             \\
$a^p, b^p, c^q, d^r, d^a / d^{\rho(r, p)}$                                                                              &                             & $ \myw{r - 1}{p}$                                                   \\
$a^p, b^p, c^q, d^r, c^a / c^{\rho(q, p)}$                                                                              &                             & $ \myw{q - 1}{p}$                                                   \\
$a^p, b^p, c^q, d^r, c^a / c^{\rho(q, p)}, d^a / d^{\rho(r, p, k)}$                                                     & $k \in \Z_p^*$              & $(p - 1) \myw{r - 1}{p} \myw{q - 1}{p}$                             \\
\hline
\myrcol
\multicolumn{3}{l}{{\bf Cluster 9:} $F=1$}                                          \\
${\rm Alt}_5$ (not solvable) &  & $\myw{p}{2}\myw{q}{3}\myw{r}{5}$\\
\hline
\myrcol 
\multicolumn{3}{l}{{\bf Parameter sets}}     \\
\multicolumn{3}{l}{$\mathcal{P}_1 = \{ (x, y) \colon 0 \le x \le \frac12(q - 1),\; 0 \le y \le \frac12(r - 1)\} \;\cup\; \{(x, y) \colon 1 \le x \le \frac12(q - 3),\; \frac12(r+1) \le y \le {r - 2}\}$ }                                                      \\[1.5ex]
\multicolumn{3}{l}{$\mathcal{P}_2 = \{(x, 0) \colon 0 \le x \le \frac12(q - 3)\} \;\cup\; \{(x, y) \colon 0 \le x \le \frac12(q - 1),\; 1 \le y \le \frac12(q - 3)\}\;\cup\; \{(\frac12(q - 1), \frac12(q - 1))\}$}                                                                      \\
\multicolumn{3}{l}{\hspace*{4.63cm}$\cup \; \{(x, y) \colon 0 \le x \le \frac12(q - 3),\; \frac12(q-1) \le y \le q-2\}$ }\\
\bottomrule\caption{Groups of order $p^2qr$ with $r>q$.}\label{tabp2qr} 
\end{longtable}


}
    
\subsection{Algorithms} \label{secalgmr}   

\noindent Theorem \ref{thp2q}  yields an enumeration formula for the groups
of order $n \in \mathcal{O}$, and our proof of Theorem \ref{thp2q} directly translates to a construction algorithm for one or all groups of order $n$. If only one specific group is to be constructed, then the clusters and enumeration formulas listed in Tables~\ref{tablep2q}--\ref{tabp2qr} are used: If the group with ID\footnote{\label{ftID}From Section 2.3 onwards ``ID'' always refers to the ordering used in our GAP implementation \cite{xp};  due to different construction algorithms, our ID can be different to the ID used in the SmallGroups Library.} $(n,i)$ is requested, then the counting formulas in the right columns of the tables allow us to directly determine the $i$-th group presentation. Conversely, if a group $G$ of order $n$ is given, then its ID $(n,i)$ can be computed by, firstly, determining the cluster that contains $G$, and, secondly, deciding to which group in the cluster the given group is isomorphic to; this can be done by computing various invariants of $G$ and comparing these with the choices made in Notation \ref{not1}. In Section \ref{secconst} we give more details and exemplify these construction and identification processes. A GAP implementation of our algorithms is available online, see \cite{xp}; we comment on its performance  in Section~\ref{secalg}.

\enlargethispage{6ex}


\section{Counting results}\label{secprel}

\noindent The aim of this section is to provide proofs for the various parts of Theorem \ref{thp2q}. Many of the groups we consider are split extensions $G=H\ltimes S$ with $S$ being a Sylow subgroup. If $S\cong C_p^m$ is elementary abelian, then $H$ acts on $S$ as a subgroup of $\Aut(S)\cong \GL_m(p)$. We therefore start this section by discussing some preliminary enumeration results for split extensions and subgroups of linear~groups.

\subsection{Counting subgroups of linear groups} 
The next lemma is from   Short \cite[Theorems 2.3.2 \& 2.3.3]{Short}.
\begin{lemma} \label{irred}
There is an irreducible cyclic subgroup of order $m$ in $\GL_n(p)$ if and only if $m \mid (p^n - 1)$ and 
$m \nmid (p^d - 1)$ for each $d\in\{1,\ldots,n-1\}$; if such a subgroup exists, then it is 
unique up to conjugacy. 
\end{lemma}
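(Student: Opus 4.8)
The plan is to use the standard dictionary between cyclic subgroups of $\GL_n(p)$ that act irreducibly on $V=\F_p^n$ and copies of the field $\F_{p^n}$ sitting inside $\mathrm{End}_{\F_p}(V)$. First I would treat necessity: given an irreducible cyclic subgroup $C=\gen{g}$ of order $m$, irreducibility makes $V$ a simple $\F_p[x]$-module via $x\mapsto g$, so the minimal polynomial $f$ of $g$ is irreducible of degree $n$ and $R:=\F_p[g]\subseteq\mathrm{End}_{\F_p}(V)$ is a field isomorphic to $\F_{p^n}$, with $g$ generating $R$ over $\F_p$. Then $g\in R^\times$ has order $m$, so $m\mid |R^\times|=p^n-1$; and since $g$ generates $R$ it lies in no proper subfield $\F_{p^d}$ (equivalently $g^{p^d-1}\neq 1$) for any divisor $d<n$ of $n$, which forces $m\nmid p^d-1$ for those $d$. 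I would then upgrade this to all $d\in\{1,\dots,n-1\}$ via the identity $\gcd(p^d-1,p^n-1)=p^{\gcd(d,n)}-1$ together with $m\mid p^n-1$.

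For sufficiency I would run the construction in reverse: assuming $m\mid p^n-1$ and $m\nmid p^d-1$ for all $d<n$, pick the unique element $\zeta$ of order $m$ in the cyclic group $\F_{p^n}^\times$; the hypothesis rules out $\zeta\in\F_{p^d}$ for every proper divisor $d$ of $n$, so $\F_p(\zeta)=\F_{p^n}$. Choosing an $\F_p$-basis identifies multiplication-by-$\zeta$ on $\F_{p^n}$ with a matrix $g\in\GL_n(p)$ of order $m$, and the only $g$-invariant subspaces of $V$ are the $\F_p[\zeta]$-submodules, i.e.\ the ideals of the field $\F_{p^n}$, namely $0$ and $V$; hence $\gen{g}$ is an irreducible cyclic subgroup of order $m$.

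For uniqueness, given two such subgroups $C_i=\gen{g_i}$ with fields $R_i=\F_p[g_i]\cong\F_{p^n}$, I would fix an $\F_p$-algebra isomorphism $\psi\colon R_1\to R_2$. Then $\psi(g_1)\in R_2^\times$ has order $m$, and since $R_2^\times$ is cyclic with unique order-$m$ subgroup $\gen{g_2}$, we get $\psi(g_1)=g_2^k$ for some $k$ coprime to $m$; as $\psi$ preserves minimal polynomials over $\F_p$, $g_1$ and $g_2^k$ have the same minimal polynomial, which (being of degree $n$) coincides with the characteristic polynomial in each case. By rational canonical form, two matrices in $\GL_n(p)$ with equal irreducible characteristic polynomial are conjugate, so $h g_1 h^{-1}=g_2^k$ for some $h\in\GL_n(p)$, whence $h C_1 h^{-1}=\gen{g_2^k}=C_2$.

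I expect the uniqueness claim to be the main obstacle, the subtle point being that generators of the two subgroups need not themselves be conjugate — one must first replace $g_2$ by a suitable power — which is precisely what forces the two auxiliary facts used above: a finite cyclic group has a unique subgroup of each order, and elements of $\GL_n(p)$ with equal squarefree characteristic polynomial form a single conjugacy class.
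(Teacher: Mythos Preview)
The paper does not actually prove this lemma: it is quoted directly from Short \cite[Theorems 2.3.2 \& 2.3.3]{Short} with no argument given. Your proof is correct and is essentially the standard one underlying Short's statement, using the identification of $\F_p[g]$ with the field $\F_{p^n}$ and the rational canonical form for the uniqueness step.

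One small slip worth fixing: in the sufficiency direction you say ``pick the unique element $\zeta$ of order $m$'', but of course there are $\phi(m)$ such elements; what is unique is the order-$m$ subgroup of $\F_{p^n}^\times$. The argument is unaffected, since any choice of $\zeta$ generates $\F_{p^n}$ over $\F_p$ under your hypothesis.
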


We now determine the number of conjugacy classes of certain subgroups of $\GL_2(p)$ and $\GL_3(p)$.


\begin{proposition}
\label{linear2}
For a group $G$ and integer $m$ write $s_m(G)$ for the number of conjugacy classes of subgroups of order $m$ in $G$. If  $p$, $q$, and $r$ are distinct primes, then the following hold.
\begin{ithm}
\item
If $q=2$, then $s_q(\GL_2(p)) = 2$; if $q > 2$, then $s_q(\GL_2(p)) = \tfrac{q+3}{2} \w_{p-1}^q + \w_{p+1}^q$. If $H\leq \GL_2(p)$ has order $q$, then $|N_{\GL_2(p)}(H)/C_{\GL_2(p)}(H)| \leq 2$ with equality for $q>2$ and $\w_{p-1}^q+\w_{p+1}^q$ subgroup~classes.
\item We have $s_{q^2}(\GL_2(p)) = \w_{p-1}^q + \tfrac{q^2+q+2}{2} \w_{p-1}^{q^2} + \w_{p+1}^{q^2}$.
\item
If $q=2$, then $s_{qr}(\GL_2(p)) = \tfrac{3r+7}{2} \w_{p-1}^r + 2 \w_{p+1}^r$; if $r,q > 2$, then
\[ s_{qr}(\GL_2(p)) = \tfrac{qr + q + r + 5}{2} \w_{p-1}^{qr} + 
\w_{p^2-1}^{qr}(1-\w_{p-1}^{qr}).\]
\item If $q=2$, then $s_q(\GL_3(p))=3$; if $q\geq 3$, then 
  \[ s_q(\GL_3(p)) = \tfrac{q^2 + 4q + 9 + 4 \w_{q-1}^3}{6} \w_{p-1}^q   +  \w_{(p+1)(p^2+p+1)}^q(1-\w_{p-1}^q). \]
\end{ithm}
\end{proposition}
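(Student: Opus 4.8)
The plan is to count conjugacy classes of subgroups of prime or prime-squared order in $\GL_2(p)$ and $\GL_3(p)$ by classifying them according to the rational (or rather $\Z_p$-module) structure they induce on the natural module, and then invoking Lemma \ref{irred} together with elementary normaliser calculations. For part (a), a subgroup $H\leq\GL_2(p)$ of order $q$ is cyclic generated by an element $g$ of order $q$; since $q\nmid p$, the element $g$ is semisimple, so it is either (i) scalar, (ii) diagonalisable over $\Z_p$ with two distinct eigenvalues, which requires $q\mid p-1$, or (iii) irreducible, which by Lemma \ref{irred} requires $q\mid p+1$ and $q\nmid p-1$, and then $H$ is unique up to conjugacy. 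In case (ii), the subgroup is conjugate to one generated by $\diag(\zeta^a,\zeta^b)$ for $\zeta$ of order $q$, and two such are $\GL_2(p)$-conjugate iff the pairs $\{a,b\}$ agree up to simultaneous scaling by $\Z_q^\ast$ and swapping; I would count the orbits of $\Z_q^\ast$ acting diagonally on unordered pairs (with the scalar case $a=b$ removed) — this gives the $\tfrac{q-1}{2}$ non-central diagonal classes, and together with the one scalar class and the possible irreducible class one recovers $\tfrac{q+3}{2}\w_{p-1}^q+\w_{p+1}^q$; for $q=2$ only the scalar $-I$ and the class of $\diag(1,-1)$ survive (the latter always, since $2\mid p-1$ or $2\mid p+1$ but these overlap in characteristic considerations — in fact $\diag(1,-1)$ exists for all odd $p$), giving $2$. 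The normaliser statement follows because $N/C$ embeds in the automorphisms of $H\cong C_q$ that are realised by conjugation, which is trivial for scalars and for irreducible Singer subgroups has order dividing $2$ (the Frobenius/Galois action), and equals $2$ exactly for the diagonal and irreducible non-central classes.

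For parts (b) and (c) I would run the same semisimplicity analysis one rung higher. A subgroup of order $q^2$ in $\GL_2(p)$ is either cyclic or $\cong C_q\times C_q$; a cyclic one is generated by a semisimple element of order $q^2$, hence diagonalisable (needs $q^2\mid p-1$) or irreducible (needs $q^2\mid p+1$ and $q^2\nmid p-1$, unique by Lemma \ref{irred}); the elementary abelian $C_q^2$ case forces simultaneous diagonalisability, hence $q\mid p-1$, and the subgroup is the full diagonal torus of exponent $q$, unique up to conjugacy. Counting the diagonalisable cyclic classes is again an orbit count of $\Z_{q^2}^\ast$ on unordered pairs $\{a,b\}$ with $a$ or $b$ a unit mod $q^2$; summing the three contributions yields $\w_{p-1}^q+\tfrac{q^2+q+2}{2}\w_{p-1}^{q^2}+\w_{p+1}^{q^2}$. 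For $s_{qr}(\GL_2(p))$ with $q<r$ distinct, a subgroup of order $qr$ is cyclic; its generator has order $qr$ and is semisimple, so either diagonalisable over $\Z_p$ (needs $qr\mid p-1$) or acts irreducibly on the $2$-dimensional module (needs $qr\mid p^2-1$ but $qr\nmid p-1$, unique by Lemma \ref{irred}); but one must be careful that a ``mixed'' situation — $C_q$ diagonal, $C_r$ irreducible — cannot give a $2$-dimensional irreducible module since the $C_q$-part already splits it, so the only options are fully split or fully irreducible, and this is exactly where the term $\w_{p^2-1}^{qr}(1-\w_{p-1}^{qr})$ comes from. The diagonal count is an orbit count of $\Z_{qr}^\ast$ on unordered pairs with at least one coordinate of order $qr$; I expect it to give $\tfrac{qr+q+r+5}{2}\w_{p-1}^{qr}$ after the usual CRT bookkeeping, and the $q=2$ case is separated out because swapping becomes partly absorbed by negation, adjusting constants to $\tfrac{3r+7}{2}\w_{p-1}^r+2\w_{p+1}^r$.

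For part (d), a subgroup of order $q$ in $\GL_3(p)$ is again generated by a semisimple element $g$, and the $\Z_p[g]$-module structure of the natural $3$-dimensional module decomposes according to how $q$ divides $p^d-1$: the possibilities are (i) $g$ scalar; (ii) $g$ diagonalisable, i.e.\ the module is a sum of three $1$-dimensional pieces, requiring $q\mid p-1$; (iii) the module splits as a $1$-dimensional summand plus a $2$-dimensional irreducible, requiring $q\mid p+1$ and $q\nmid p-1$; (iv) the module is $3$-dimensional irreducible, requiring $q\mid p^2+p+1$ and $q\nmid p-1$ (note $p^2+p+1$ and $p-1$ — and $p^2+p+1$ and $p+1$ — are essentially coprime up to a factor $3$, which is the source of the $\w_{q-1}^3$ correction), with uniqueness from Lemma \ref{irred}. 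Cases (iii) and (iv) each contribute a single class under their divisibility conditions, packaged as $\w_{(p+1)(p^2+p+1)}^q(1-\w_{p-1}^q)$ — here one checks that when $q\mid p-1$ neither (iii) nor (iv) occurs (since then $q\nmid p+1$ unless $q=2$, excluded, and $q\mid p^2+p+1$ would force $q\mid 3$, and a separate check handles $q=3\mid p-1$), so the indicator is genuinely $(1-\w_{p-1}^q)$ times ``$q$ divides $p+1$ or $p^2+p+1$''. The bulk of the work, and the step I expect to be the main obstacle, is case (ii): counting $\GL_3(p)$-classes of order-$q$ subgroups generated by $\diag(\zeta^a,\zeta^b,\zeta^c)$, which amounts to counting orbits of $\Z_q^\ast$ acting by scaling on unordered triples $\{a,b,c\}\in(\Z_q)^3\setminus\{\text{all equal}\}$ modulo the symmetric group $S_3$ permuting coordinates — a double-coset / Burnside count that naturally produces the quadratic $\tfrac{q^2+4q+9+4\w_{q-1}^3}{6}$, the $\tfrac{1}{6}$ and the $\w_{q-1}^3$ reflecting the size of $S_3$ and the extra fixed points when $3\mid q-1$ (cube roots of unity in $\Z_q$ giving the exceptional $\Z_q^\ast$-orbit $\{1,\omega,\omega^2\}$ of triples). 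For $q=2$ the scaling action is trivial and one simply counts $S_3$-orbits of non-constant triples over $\Z_2$, giving the two non-scalar classes $\diag(1,1,-1)$ and $\diag(1,-1,-1)$ plus the scalar, hence $s_2(\GL_3(p))=3$. Throughout, the only genuinely delicate points are (1) verifying the divisibility side-conditions are mutually exclusive in the right way so that the indicator functions add up without double-counting, especially the interaction of $q=3$ with $p^2+p+1$, and (2) the Burnside bookkeeping in case (ii) of part (d); everything else is a routine semisimple-module classification plus Lemma \ref{irred}.
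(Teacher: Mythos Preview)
Your overall strategy---classify order-$m$ subgroups of $\GL_n(p)$ by the $\Z_p$-module structure of the natural representation, invoke Lemma~\ref{irred} for the irreducible pieces, and finish with an orbit count---is exactly the paper's approach, and parts (b) and (d) go through essentially as you describe (the paper carries out the Burnside count in (d) with the explicit $S_3$-action you anticipate). Two points, however, are genuinely wrong.

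\textbf{Part (c), the case $q=2$.} Your blanket claim that a subgroup of order $qr$ in $\GL_2(p)$ is cyclic fails precisely here: dihedral subgroups $D_{2r}$ occur. If $r\mid p-1$, then $\langle s,\diag(a,a^{-1})\rangle$ with $s=\mymat{0}{1}{1}{0}$ and $a\in\Z_p^\ast$ of order $r$ is an irreducible dihedral $D_{2r}$; if $r\mid p+1$, the Singer normaliser $C_2\ltimes S$ contains a dihedral $D_{2r}$ in addition to the cyclic $C_{2r}$. Counting only cyclic subgroups yields $\tfrac{3r+5}{2}\w_{p-1}^r+\w_{p+1}^r$, not the stated $\tfrac{3r+7}{2}\w_{p-1}^r+2\w_{p+1}^r$; the extra $\w_{p-1}^r+\w_{p+1}^r$ is exactly the dihedral contribution. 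Your remark that ``swapping becomes partly absorbed by negation'' is not the mechanism---the adjustment is structural, not arithmetic. (For $q,r>2$ your cyclicity claim is correct, but it needs the argument that $|N_{\GL_2(p)}(C_r)/C_{\GL_2(p)}(C_r)|\leq 2$, so an odd $C_q$ cannot act nontrivially on~$C_r$.)

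\textbf{Part (a), the normaliser quotient.} You say equality $|N/C|=2$ holds ``for the diagonal and irreducible non-central classes''. Not so: for $H=\langle\diag(a,a^\ell)\rangle$ with $\ell\not\equiv\pm1\bmod q$, the coordinate swap sends $H$ to a \emph{different} subgroup, so $N_G(H)=C_G(H)$. Among the reducible classes only the single class $\langle\diag(a,a^{-1})\rangle$ has $|N/C|=2$; together with the irreducible class this gives exactly $\w_{p-1}^q+\w_{p+1}^q$ classes, as stated.
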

 
\begin{proof}
  Let $S\cong C_{p^2-1}$ be a Singer cycle in $\GL_2(p)$ and let $D\cong C_{p-1}^2$ be the subgroup of diagonal matrices. Let $U\leq \GL_2(p)$ be of cubefree order $m$ with $p\nmid m$. It follows from  \cite[Lemma 8]{DEi05} that every such $U$  is conjugate to a subgroup of $N=C_2\ltimes S$ (Singer normaliser) or  of $I=C_2\ltimes D$ (maximal imprimitive). Recall that $D\cap S=Z(\GL_2(p))$ and $I/D$ swaps the diagonal entries of elements in $D$. Suppose now that $U$ is reducible. Then, up to conjugacy, $U\leq D$; if $U$ is cyclic of prime power order $m$, then, up to conjugacy, $U=\langle \diag(a,a^\ell)\rangle$ for some $\ell\in\{0,\ldots,m-1\}$ where $a\in \Z_p^\ast$ has order $m$. Note that two such groups $\langle \diag(a,a^\ell)\rangle$ and $\langle\diag(a,a^k)\rangle$ are conjugate if and only if $\ell=k$ or there is $x\in\Z_m^\ast$ with $\ell=x$ and $k\equiv x^{-1}\bmod m$. Thus, the conjugation action partitions these subgroups in pairs, unless the parameter $\ell\in\Z_m^\ast$ has order dividing~$2$. We use these results freely in the proof below.

\begin{iprf}
\item  Every group of order $q$ is cyclic. There exists a (unique) class of irreducible subgroups of order $q$ if and only if $q\mid (p+1)$ and $q\nmid (p-1)$; this requires $q\ne 2$. Now suppose $q\mid (p-1)$ and let $a\in\Z_p^\ast$ be of order~$q$. If $q=2$, then there are two classes of reducible subgroups, generated by $\diag(-1,-1)$ and $\diag(-1,1)$, respectively. If $q>2$ and $\sigma\in\Z_q^\ast$ is a generator, then there are $1+(q+1)/2$ classes of reducible subgroups, generated by $\diag(a,1)$ and $\diag(a, a^{(\sigma^k)})$ with $k\in\{0,\ldots, (q - 1)/2\}$. For the last claim, write $G=\GL_2(p)$ and let $H\leq G$ be of order $q$. If $q=2$, then $N_G(H)=C_G(H)$, so let $q>2$. Up to conjugacy, if $H$ is irreducible, then $H\leq S$ is unique and $N_G(H)/C_G(H)\cong C_2$;  if $H$ is reducible, then  $H=\langle \diag(a, a^{-1})\rangle\leq D$ with $a\in\Z_p^\ast$ of order $q$ is the unique such subgroup with $|N_G(H)/C_G(H)|=2$.

\item Any non-cyclic subgroup of order $q^2$ is reducible and exists if $q\mid (p-1)$; it is unique up to conjugacy. A cyclic irreducible subgroup of order $q^2$ exists if and only if  $q^2 \mid (p^2-1)$ and $q^2 \nmid (p-1)$; this forces $q = 2$ and $4 \nmid (p-1)$, or $q > 2$ and $q^2 \mid (p+1)$. A cyclic reducible subgroup of order $q^2$ requires $q^2\mid (p-1)$. If $a\in\Z_p^\ast$ has order $q^2$ and $\sigma\in\Z_{q^2}^\ast$ is a generator, then there are $(q^2+q+2)/2$ classes of of these subgroups, generated by $\diag(a,a^x)$ with $x\in\Z_{q^2}\setminus \Z_{q^2}^\ast$ and  $\diag(a, a^{(\sigma^k)})$ with $k\in\{0,\ldots, q(q - 1)/2\}$.

\item   First, suppose $q,r>2$. By Lemma \ref{irred}, there is one class of irreducible cyclic subgroups of order $qr$ if and only if $qr\mid (p^2-1)$ but $qr\nmid (p-1)$; this is the unique subgroup of order $qr$ in $N$, which yields a summand $\Delta_{p^2-1}^{qr}(1-\Delta_{p-1}^{qr})$. Looking at reducible cyclic subgroups, we require $qr\mid (p-1)$, and it remains to consider generators $\diag(a,a^\ell)$ with $a\in\mathbb{Z}_p^\ast$ of order $qr$ and $\diag(b,c)$ with $b,c\in\mathbb{Z}_p^\ast$ of order $q$ and $r$, respectively. In the latter case, there is a unique such subgroup. In the former case, we have to consider $q+r-1$ non-units $\ell\in \Z_{qr}\setminus \Z_{qr}^\ast$ and $qr-r-q+1$ units $\ell\in\Z_{qr}^\ast$, including four elements of order dividing~$2$. Together, we obtain $1+4+(qr-q-r-3)/2+(q+r-1)=(qr+r+q+5)/2$ classes of cyclic groups. Since $2\nmid qr$, there is no non-cyclic subgroup of order $qr$; this proves the formula for $q,r>2$.

  Now assume that $r>q=2$. If $r\mid (p+1)$, then $N$ contains, up to conjugacy, two irreducible subgroups of order $2r$, one being cyclic, the other non-cyclic; this yields the summand $2\w_{p+1}^r$. Now suppose $r\mid (p-1)$. We deal with reducible subgroups as before. The main difference is that this time $\Z_{2r}^\ast$ is cyclic, so there are only $2$ elements of order dividing~$2$; in total, we obtain $(3r+5)/2$ classes of reducible subgroups if $r\mid (p-1)$. It remains to count subgroups of $I$ that are not reducible: a short argument shows that, up to conjugacy, there is a unique such subgroup, namely, $\langle s,\diag(a,a^{-1})\rangle$ where  $a\in\Z_p^\ast$ has order $r$ and $s$ is the permutation matrix of the transposition $(1,2)$. This gives an  additional summand $\w_{p-1}^r$.

\item If $q=2$, then  $\diag(1,1,-1)$, $\diag(1,-1,-1)$, and $\diag(-1,-1,-1)$ generate the three classes of subgroups of order $2$; now let $q>2$.  Up to conjugacy, every diagonalisable subgroup of order $q$ is generated by an  element of type $\diag(a, 1, 1)$, $\diag(a, a^k, 1)$, or $\diag(a, a^k, a^\ell)$, respectively, where $a \in \Z_p^\ast$ has order $q$ and $k, \ell \in \Z_q^\ast$. By a), there are $(q+3)/2$ classes of groups with generators of the first two types; for the last type, let $\sigma\in\Z_q^\ast$ be a primitive element and note that $\langle \diag(a, a^{(\sigma^k)}, a^{(\sigma^\ell)})\rangle$ and $\langle \diag(a, a^{(\sigma^x)}, a^{(\sigma^y)})\rangle$ with $k,\ell,x,y\in \Z_{q-1}$ are conjugate if and only if
  $\{x,y\}\in \{ \{-\ell,k-\ell\},\{-k,\ell-k\},\{k,\ell\}\}$. To get the number of classes for this type, we use the Cauchy-Frobenius Lemma \cite[Lemma 2.17]{handbook} to count the $G$-orbits in the set of parameters $(k,\ell)\in\Z_{q-1}^2$, where
  \[G=\left\{\mymat{1}{0}{0}{1}, \mymat{0}{1}{1}{0},\mymat{-1}{-1}{0}{1}, \mymat{-1}{-1}{1}{0}, \mymat{0}{1}{-1}{-1},\mymat{1}{0}{-1}{-1} \right\}\cong {\rm Sym}_3.\]
  The number of orbits  is determined as $(q^2+q+4\w_{q-1}^3)/6$. In total, this gives $(q^2+4q+9+4\w_{q-1}^3)/6$ classes of diagonal subgroups of order $q$.   Non-diagonalisable groups arise from irreducible subgroups in $\GL_2(p)$ or $\GL_3(p)$. In 
  the former case, a) yields $\Delta_{p+1}^q$ groups. In the latter  case, Lemma~\ref{irred} shows that there is a (unique) class if and only if $q\mid (p^3-1)$ but $q\nmid (p^2-1)$. In total, there are $\Delta_{(p+1)(p^2+p+1)}^q(1-\Delta_{p-1}^q)$ non-diagonalisable groups. 
\end{iprf}
\end{proof}

\subsection{Counting split extensions}
A group $G$ splits over a normal subgroup $N\unlhd G$ if there exists a subgroup $U\leq G$ with $G=UN$ and $U\cap N=1$; in this case we write $G=U\ltimes N$. If $|N|$ and $|G/N|$ are coprime, then the Schur-Zassenhaus Theorem \cite[(9.1.2)]{rob} implies that $G$ splits over $N$. Conversely, if $U$ and $N$ are groups and $\varphi\colon U\to\Aut(N)$ is a homomorphism, then the group $G=U\ltimes_\varphi N$ with underlying set $\{(u,n): u\in U, n\in N\}$ and multiplication $(u,n)(v,m)=(uv,n^{\varphi(v)}m)$ splits over  $\{(1,n):n\in N\}\cong N$ with complement $\{(u,1): u\in U\}\cong U$. For some special cases, we will have to determine all split extensions of $U$ by $N$ up to isomorphism; the results of this section will be useful for that.

Let $U$ and $N$ be groups. The direct product $\Aut(U) 
\times \Aut(N)$ acts on $\varphi\in \Hom(U,\Aut(N))$ via \[ \varphi^{(\alpha, \beta)} = \ol{\beta^{-1}} \circ \varphi \circ \alpha, \]
where $\ol{\beta}\in\Aut(N)$ is the conjugation by $\beta$ in $\Aut(N)$, so $\varphi^{(\alpha,\beta)}(u)= \beta\circ \varphi(\alpha(u))\circ \beta^{-1}$ for all $u\in U$. The stabiliser of  $\varphi \in  \Hom(U,\Aut(N))$ under this action is the group of \emph{compatible pairs}
\[\Comp(\varphi)=\{(\alpha,\beta)\in\Aut(U)\times\Aut(N) :  \varphi(\alpha(u))=\beta^{-1}\circ \varphi(u)\circ \beta\quad (\forall u\in U)\}.\]
If $N$ is a abelian, then $\Comp(\varphi)$ acts on the group of $2$-cocycles $Z^2_\varphi(U,N)$ via $\gamma^{(\alpha,\beta)}=\beta^{-1}\circ\gamma\circ(\alpha,\alpha)$, inducing an action on the cohomology group $H_\varphi^2(U,N)$, see \cite[p.\ 55]{handbook}.  Every extension of $U$ by $N$ is isomorphic to a group $E(\varphi,\gamma)$ with underlying set $U\times N$ and multiplication $(u,n)(v,m)=(uv,n^{\varphi(u)}m\gamma(u,v))$ for some $\varphi\in\Hom(U,\Aut(N))$ and $\gamma\in Z^2_\varphi(U,N)$, see \cite[Section 2.7.3]{handbook}. Two such extensions of $U$ by $N$ are \emph{strongly isomorphic} if every isomorphism maps the normal subgroup $N$ to itself. This holds, for example, if  $U$ and $N$ are solvable of coprime orders, because in any extension $N$ is a unique  Hall subgroup, see \cite[(9.1.7)]{rob}, or if $N$ maps onto the Fitting subgroup in each extension. We call $N$ a \emph{strong $U$-group} if every pair of isomorphic extensions of $U$ by $N$ is also strongly isomorphic.

\begin{proposition} \label{Compat}
Let $U$ be a group and let $N$ be a strong $U$-group that is abelian. Let  $\O$ be a set of representatives of the $\Aut(U) \times \Aut(N)$ orbits in  $\Hom(U,\Aut(N))$,
and for each $\varphi \in \O$ let $o_\varphi$ be the number of $\Comp(\varphi)$-orbits in $H^2_\varphi(U,N)$. There are $\sum\nolimits_{\varphi \in \O} o_\varphi$ isomorphism
types of extensions of $U$ by $N$.
\end{proposition}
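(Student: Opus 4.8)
The plan is to combine the standard classification of extensions up to strong isomorphism with the orbit-counting reduction afforded by the two group actions introduced just before the statement. Recall from \cite[Section 2.7.3]{handbook} that every extension of $U$ by $N$ is isomorphic to some $E(\varphi,\gamma)$ with $\varphi\in\Hom(U,\Aut(N))$ and $\gamma\in Z^2_\varphi(U,N)$, and that $E(\varphi,\gamma)\cong E(\varphi,\gamma')$ by a strong isomorphism precisely when $\gamma$ and $\gamma'$ have the same image in $H^2_\varphi(U,N)$ (so strong-isomorphism classes for a fixed $\varphi$ are parametrised by $H^2_\varphi(U,N)$). The first step is therefore to recall this parametrisation and to make precise when $E(\varphi,\gamma)$ and $E(\psi,\delta)$ are \emph{strongly} isomorphic for possibly different $\varphi,\psi$: this happens iff there is $(\alpha,\beta)\in\Aut(U)\times\Aut(N)$ with $\psi=\varphi^{(\alpha,\beta)}$ and $\delta$ equal in $H^2_\psi(U,N)$ to the image of $\gamma$ under the induced map $H^2_\varphi(U,N)\to H^2_{\varphi^{(\alpha,\beta)}}(U,N)$.

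Second, I would package this as a group action: the group $\Aut(U)\times\Aut(N)$ acts on the disjoint union $\coprod_{\varphi\in\Hom(U,\Aut(N))} H^2_\varphi(U,N)$, compatibly with its action on $\Hom(U,\Aut(N))$, and strong-isomorphism classes of extensions are exactly the orbits of this action. Restricting to a single $\Aut(U)\times\Aut(N)$-orbit of homomorphisms, represented by $\varphi\in\O$, the orbits of the big action lying over that orbit are in bijection with the orbits of the stabiliser $\Stab(\varphi)=\Comp(\varphi)$ acting on the fibre $H^2_\varphi(U,N)$ — this is the usual ``orbits over a transitive base correspond to stabiliser-orbits on a fibre'' fact. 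Hence the number of strong-isomorphism classes of extensions is $\sum_{\varphi\in\O} o_\varphi$, where $o_\varphi$ is the number of $\Comp(\varphi)$-orbits on $H^2_\varphi(U,N)$.

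Third, I would invoke the hypothesis that $N$ is a \emph{strong $U$-group}: by definition every pair of isomorphic extensions of $U$ by $N$ is already strongly isomorphic, so isomorphism classes and strong-isomorphism classes of such extensions coincide. Combining this with the previous step gives exactly $\sum_{\varphi\in\O} o_\varphi$ isomorphism types, as claimed. A couple of routine verifications are needed along the way and should be stated but not belaboured: that the $\Aut(U)\times\Aut(N)$-action on cocycles descends to cohomology (so that $o_\varphi$ is well defined), which is noted in \cite[p.\ 55]{handbook}; that $\Comp(\varphi)$ genuinely is the stabiliser of $\varphi$, which is immediate from the definitions given; and that the action on the fibre is the one through which $\Comp(\varphi)$ acts on $H^2_\varphi(U,N)$, i.e.\ $\gamma^{(\alpha,\beta)}=\beta^{-1}\circ\gamma\circ(\alpha,\alpha)$.

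The main obstacle is the bookkeeping around changing the cocycle's ``home'' group: when $(\alpha,\beta)$ sends $\varphi$ to $\varphi^{(\alpha,\beta)}$, a cocycle for $\varphi$ is transported to a cocycle for $\varphi^{(\alpha,\beta)}$, and one must check that two extensions $E(\varphi,\gamma)$ and $E(\psi,\delta)$ are strongly isomorphic exactly when some such $(\alpha,\beta)$ carries the class of $\gamma$ to the class of $\delta$ — i.e.\ that the explicit isomorphism $E(\varphi,\gamma)\to E(\varphi^{(\alpha,\beta)},\gamma^{(\alpha,\beta)})$ built from $(\alpha,\beta)$ exhausts all strong isomorphisms modulo the strong self-isomorphisms already accounted for by $H^2$. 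This is a standard but slightly fiddly cocycle computation; everything else is a clean application of orbit counting plus the definition of strong $U$-group.
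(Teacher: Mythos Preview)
Your proposal is correct and follows essentially the same approach as the paper's proof: both reduce to strong isomorphism via the strong $U$-group hypothesis, show that isomorphic extensions force $\varphi$ and $\psi$ into the same $\Aut(U)\times\Aut(N)$-orbit, transport cocycles to a chosen representative $\varphi\in\O$, and then count $\Comp(\varphi)$-orbits on $H^2_\varphi(U,N)$. The only cosmetic differences are that the paper invokes the strong $U$-group hypothesis at the outset (to pin down the shape $\tau((u,n))=(\alpha(u),\beta(n)\varepsilon(u))$ of an arbitrary isomorphism) rather than at the end, and that for the final fixed-$\varphi$ count the paper cites \cite[Section 4.2.1]{BEi99} directly instead of phrasing it via the ``stabiliser-orbits on a fibre'' principle.
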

\begin{proof}Suppose $\tau\colon E(\varphi,\gamma)\to E(\psi,\delta)$ is an isomorphism. Since $N$ is a strong $U$-group, there exist $(\alpha,\beta)\in\Aut(U)\times\Aut(N)$ and a map $\varepsilon\colon U\to N$ such that each $\tau((u,n))=(\alpha(u),\beta(n)\varepsilon(u))$. Comparing the images of $(1,n^{\varphi(u)})=(u^{-1},1)(1,n)(u,1)$ under $\tau$ shows that $\beta(n^{\varphi(u)})=\beta(n)^{\psi(\alpha(u))}$ for all $u\in U$ and $n\in N$, so $\varphi$ and $\psi$ are in the same $\Aut(U)\times \Aut(N)$ orbit. This shows that extensions using different $\varphi,\psi\in\mathcal{O}$ are non-isomorphic.
Now consider  $\varphi\in \Hom(U,\Aut(N))$ with representative $\psi\in\O$, that is, there is $(\alpha,\beta)\in\Aut(U)\times\Aut(N)$ such that $\beta(n^{\varphi(u)})=\beta(n)^{\psi(\alpha(u))}$ for all $u\in U$ and $n\in N$; a direct computation shows that for every $\gamma\in Z^2_\varphi(U,N)$ the group $E(\varphi,\gamma)$ is isomorphic to $E(\psi,\gamma^{(\alpha^{-1},\beta^{-1})})$ via $(u,n)\mapsto (\alpha(u),\beta(n))$. The claim of the theorem now follows from  \cite[Section 4.2.1]{BEi99}, which shows that for a fixed $\varphi\in\mathcal{O}$ there are $o_\varphi$ isomorphism types of extensions $E(\varphi,\gamma)$ with $\gamma\in Z^2_\varphi(U,N)$. 
\end{proof}

A special case is considered in \cite[Lemma 5 and Theorem 14]{DEi05}:

\begin{lemma}\label{cubefree}
Let $U$ and $N$ be groups such that $N$ and the Sylow $p$-subgroup $P\leq U$ are cyclic of order $p$. If $N$ and $P$ are isomorphic as $N_U(P)$-modules, then there is a unique isomorphism type of non-split extensions of $U$ by $N$; otherwise every extension is a split extension.
\end{lemma}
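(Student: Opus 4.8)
The plan is to fix the $U$-module structure on $N$, say given by $\varphi\colon U\to\Aut(N)$, and to study the extensions $E$ of $U$ by $N$ realising $\varphi$ via their restriction to a Sylow $p$-subgroup. Since $\gcd(p,p-1)=1$ and $\Aut(N)\cong C_{p-1}$, the subgroup $P$ acts trivially on $N$; as $P$ also acts trivially on itself, the $N_U(P)$-module structures of both $N$ and $P$ factor through $N_U(P)/P$. For any extension $1\to N\to E\to U\to1$ with projection $\pi\colon E\to U$, set $S=\pi^{-1}(P)$; this is a Sylow $p$-subgroup of $E$ of order $p^2$ containing $N$. Since $S/N\cong P$ acts trivially on the abelian group $N$, we get $N\leq Z(S)$, and as $S/Z(S)$ is then cyclic, $S$ is abelian; hence $S\cong C_{p^2}$ or $S\cong C_p^2$. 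Because $[U:P]$ is coprime to $|N|$, restriction--corestriction shows that $\operatorname{res}\colon H_\varphi^2(U,N)\to H^2(P,N)$ is injective, and it carries the class of $E$ to the class of $S$ in $H^2(P,N)$ (where $P$ acts trivially, so $H^2(P,N)$ is cyclic of order $p$). Therefore $E$ splits over $N$ if and only if its class is $0$, if and only if $S\cong C_p^2$.

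Next I would show that \emph{if some extension has $S\cong C_{p^2}$, then $N\cong P$ as $N_U(P)$-modules}: in that case $N$ is the unique subgroup of order $p$ of $S$, hence characteristic in $S$, so $N_E(S)=\pi^{-1}(N_U(P))$ acts by conjugation on $S\cong C_{p^2}$; the two maps $\Aut(C_{p^2})\cong\Z_{p^2}^\ast\to\Z_p^\ast$ onto $\Aut(N)$ and onto $\Aut(S/N)$ are both reduction modulo $p$, hence equal, so the induced $N_U(P)$-actions on $N$ and on $S/N\cong P$ coincide. Contrapositively, if $N\not\cong P$ as $N_U(P)$-modules, then every such $S$ is elementary abelian, whence by the first paragraph every extension splits; this proves the ``otherwise'' statement.

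It remains to treat the case $N\cong P$ as $N_U(P)$-modules. For uniqueness: $H_\varphi^2(U,N)$ embeds into the order-$p$ group $H^2(P,N)$, so it has order $1$ or $p$; if it has order $p$, then any two of its $p-1$ nonzero classes differ by some $\beta\in\Aut(N)$ acting by scalar multiplication, and $(\mathrm{id}_U,\beta)\in\Comp(\varphi)$ since $\Aut(N)$ is abelian and hence centralises $\varphi(U)$; thus all nonzero classes lie in one $\Comp(\varphi)$-orbit, and the isomorphism exhibited in the proof of Proposition~\ref{Compat} identifies the corresponding groups, so there is at most one isomorphism type of non-split extension. For existence I would show $H_\varphi^2(U,N)\neq0$: by the Cartan--Eilenberg stable-element theorem the image of the injection $H_\varphi^2(U,N)\hookrightarrow H^2(P,N)$ is the set of $N_U(P)$-stable classes, which, since $P$ has prime order, is simply $H^2(P,N)^{N_U(P)}$; the conjugation action of $N_U(P)$ on $H^2(P,N)$ is through the character obtained by twisting the coefficient action on $N$ by the inverse of the action on $P$, hence is trivial precisely when $N\cong P$ as $N_U(P)$-modules, so in the matching case $H_\varphi^2(U,N)\cong H^2(P,N)\neq0$. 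Alternatively one can build a non-split extension directly: write $N_U(P)=H\ltimes P$ by Schur--Zassenhaus, lift the common character $H\to\Z_p^\ast=\Aut(C_p)$ along $\Z_{p^2}^\ast\to\Z_p^\ast$, form $E_0=H\ltimes C_{p^2}$ -- a non-split extension of $N_U(P)$ by $pC_{p^2}\cong N$ -- and corestrict its cohomology class to $U$, checking with Mackey's double-coset formula that its restriction to $P$ stays nonzero. The crux of the argument is exactly this existence step, i.e.\ the computation that $H_\varphi^2(U,N)$ is nonzero precisely in the matching case; by contrast the splitting direction and the uniqueness are soft, using only the coprimality of $[U:P]$ and $|N|$ together with the fact that $\Aut(N)$ is abelian.
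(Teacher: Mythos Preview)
The paper does not prove this lemma; it is quoted from \cite[Lemma~5 and Theorem~14]{DEi05}. Your cohomological argument is correct and self-contained. The reduction to the Sylow $p$-subgroup $S=\pi^{-1}(P)$ via injectivity of restriction is the right move, and the observation that any automorphism of $C_{p^2}$ induces the \emph{same} scalar on its order-$p$ subgroup and on its order-$p$ quotient cleanly yields the ``otherwise'' direction. For uniqueness, note that you are only invoking the explicit-isomorphism direction of Proposition~\ref{Compat} (namely $E(\varphi,\gamma)\cong E(\varphi,\beta\gamma)$ for $(\mathrm{id}_U,\beta)\in\Comp(\varphi)$), which does not require $N$ to be a strong $U$-group; this matters here since $|N|$ and $|U|$ share the prime~$p$. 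Your existence step via stable elements is also sound: because $|P|=p$, the stability condition is vacuous outside $N_U(P)$, and the $N_U(P)$-action on $H^2(P,N)\cong C_p$ is indeed through the character $\chi_N\chi_P^{-1}$---this follows from your second-paragraph computation, which shows that the diagonal of $\Aut(P)\times\Aut(N)$ is exactly the stabiliser of the class of $C_{p^2}$. The alternative Mackey construction also goes through, since $P\cap gN_U(P)g^{-1}=1$ for every $g\notin N_U(P)$.
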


We conclude this section by discussing the number of group extensions of solvable groups of coprime order; this requires the following  definition:

\begin{definition}
For solvable groups $U$ and $N$ of coprime order we define the following.  Let $\SS$ be a set of representatives for the conjugacy classes of subgroups in $\Aut(N)$,
let $\K$ be a set of representatives for the $\Aut(U)$-classes of 
normal subgroups in $U$, and set \[\O = \{ (S, K) : S \in \SS, K \in \K \mbox{ with } S \cong U/K \}.\] For $(S,K)\in\O$ let  $A_K, A_S\leq\Aut(U/K)$ be defined as follows:  $A_K$ is the subgroup induced by the action of $\Stab_{\Aut(U)}(K)$ on $U/K$; the group $A_S$ is defined as the preimage under a fixed isomorphism $U/K\to S$ of the subgroup of  $\Aut(S)$ induced by the action of $N_{\Aut(N)}(S)$. Finally, we set \[\ind_K = [\Aut(U/K) : A_K]\quad\text{and}\quad \DC(S, K) = A_K \setminus \Aut(U/K) /  A_S.\]
\end{definition}

\medskip

\begin{proposition} \label{Taunt}
  Let $N$ and $U$ be solvable of coprime orders. Let $\sigma(U,N)$ be the number of isomorphism types of extensions of $U$ by $N$. We have $\sigma(U,N)= \sum_{(S,K) \in \O} |\DC(S,K)|$, and the following hold:
  \begin{ithm}
  \item  If $N$ and $\Aut(N)$ are cyclic, then $\sigma(U,N)=\sum_{\ell \mid \pi} \sum_{K \in \K_\ell}  \ind_K$, where $\pi = \gcd(|U|,|\Aut(N)|)$ and $\K_\ell$ is a set of representatives of the 
$\Aut(U)$-classes in $\{K\unlhd U : U/K\cong C_\ell\}$.  
\item If $q$ is a prime and $U \cong C_{q^k}$, then $\sigma(U,N)=|\TT|$, where  $\TT$ is a set of conjugacy class representatives of cyclic subgroups of order dividing $q^k$ in $\Aut(N)$.
  \end{ithm} 
\end{proposition}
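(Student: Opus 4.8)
The plan is to start from the well-known classification of extensions of solvable groups of coprime order due to Taunt; the formula $\sigma(U,N)=\sum_{(S,K)\in\O}|\DC(S,K)|$ is essentially a restatement of this, so the first step is to recall why it holds. Since $\gcd(|U|,|N|)=1$, the Schur--Zassenhaus theorem guarantees that every extension $G$ of $U$ by $N$ splits, and moreover (by \cite[(9.1.7)]{rob}) $N$ is characterised inside $G$ as its unique Hall subgroup; hence $N$ is a strong $U$-group and every isomorphism between two such extensions is strong. So an extension is determined, up to isomorphism, by the action $U\to\Aut(N)$ up to the equivalence coming from $\Aut(U)\times\Aut(N)$ (compatible pairs), with no cohomological contribution since the extension splits. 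The first step therefore is to set up this bijection: an extension corresponds to a pair consisting of (the isomorphism type of) a subgroup $S\le\Aut(N)$ (the image of the action, taken up to $\Aut(N)$-conjugacy, contributing the choice of $S\in\SS$) together with a surjection $U\to S$, up to precomposition by $\Aut(U)$ and postcomposition by $N_{\Aut(N)}(S)$.

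The second step is to parametrise surjections $U\twoheadrightarrow S$ by their kernels. A surjection $U\to S$ is the same as a normal subgroup $K\unlhd U$ with $U/K\cong S$, together with a choice of isomorphism $U/K\xrightarrow{\sim}S$. Counting $K$ up to $\Aut(U)$ gives the index set $\K$, and the condition $S\cong U/K$ pins down which pairs $(S,K)$ occur, i.e.\ the set $\O$. For a fixed $(S,K)$, the isomorphisms $U/K\to S$ form a torsor under $\Aut(U/K)$ (after fixing one such isomorphism, identify this with $\Aut(S)$ pushed into $\Aut(U/K)$, which is exactly how $A_S$ is defined); the residual action of $\Aut(U)$ that fixes the class of $K$ induces $A_K\le\Aut(U/K)$, and the residual freedom in $S$ is $A_S$. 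Thus the isomorphism types of extensions with "combinatorial type" $(S,K)$ are in bijection with the double cosets $A_K\backslash\Aut(U/K)/A_S=\DC(S,K)$. Summing over $(S,K)\in\O$ gives $\sigma(U,N)=\sum_{(S,K)\in\O}|\DC(S,K)|$. The one point needing care here is to check that two pairs $(S,K)$ and $(S',K')$ with $K,K'$ in different $\Aut(U)$-classes (or $S,S'$ in different $\Aut(N)$-classes) genuinely give non-isomorphic extensions — this follows from the strong-isomorphism property and the fact that the kernel of the action is an invariant of $G$ up to the ambiguities already quotiented out.

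For part a), assume $N$ and $\Aut(N)$ are cyclic. Then every subgroup $S\le\Aut(N)$ is cyclic and, being a subgroup of a cyclic group, is unique of its order; so $S$ is determined by $|S|=\ell$, which must divide $\pi=\gcd(|U|,|\Aut(N)|)$ since $S\cong U/K$ is a quotient of $U$. Because $\Aut(N)$ is cyclic, $N_{\Aut(N)}(S)=\Aut(N)$ acts trivially on $S$ by conjugation, so $A_S=1$ in $\Aut(U/K)$; hence $\DC(S,K)=A_K\backslash\Aut(U/K)$, which has cardinality $[\Aut(U/K):A_K]=\ind_K$. Collecting, $\sigma(U,N)=\sum_{\ell\mid\pi}\sum_{K\in\K_\ell}\ind_K$ where $\K_\ell$ runs over $\Aut(U)$-classes of normal subgroups with quotient $C_\ell$. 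For part b), take $U\cong C_{q^k}$ with $q$ prime, $q\nmid|N|$. Every quotient of $U$ is cyclic of order dividing $q^k$, and every image $S\le\Aut(N)$ of such an action is therefore cyclic of $q$-power order dividing $q^k$; conversely, for any cyclic subgroup $S\le\Aut(N)$ of order dividing $q^k$ there is a unique normal subgroup $K\unlhd U$ with $U/K\cong S$ (namely $K$ of the appropriate index), and $\Aut(U)$ acts trivially on the set of subgroups of the cyclic group $U$, so each such $K$ is its own $\Aut(U)$-class and $A_K=\Aut(U/K)$, forcing $\DC(S,K)$ to be a single double coset. Hence the extensions are in bijection with the conjugacy classes of cyclic subgroups of $q$-power order dividing $q^k$ in $\Aut(N)$, i.e.\ $\sigma(U,N)=|\TT|$. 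I expect the main obstacle to be the careful bookkeeping in the general formula — precisely identifying which group acts on which side when passing between "surjections $U\to S$" and "isomorphisms $U/K\to S$", and confirming that the double-coset space $A_K\backslash\Aut(U/K)/A_S$ is the exact orbit set — rather than anything in parts a) and b), which are routine specialisations once the general statement is in place.
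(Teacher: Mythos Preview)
Your proposal is correct and follows essentially the same route as the paper: use Schur--Zassenhaus and the strong $U$-group property to reduce to counting $\Aut(U)\times\Aut(N)$-orbits on $\Hom(U,\Aut(N))$, then decompose these orbits by the image $S$ (up to $\Aut(N)$-conjugacy) and the kernel $K$ (up to $\Aut(U)$) to obtain the double-coset count $|\DC(S,K)|$; parts a) and b) are the same specialisations (\,$A_S=1$ when $\Aut(N)$ is abelian, $A_K=\Aut(U/K)$ when $U$ is cyclic). Your write-up is somewhat more detailed than the paper's (which simply cites Proposition~\ref{Compat}/Taunt for the orbit count and then dispatches a) and b) in one line each), but the underlying argument is the same.
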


\begin{proof} 
  The assumptions imply that  $N$ is a strong $U$-group and that every extension splits; in particular, each $H^2_\varphi(U,N)=1$ is trivial.  Proposition \ref{Compat} (or Taunt \cite{Tau54}) shows that $\sigma(U,N)$ is the number of $\Aut(U) \times \Aut(N)$-orbits in $ \Hom(U,\Aut(N))$. These orbits correspond to the union of $A_K \times N_{\Aut(N)}(S)$-orbits in the set of isomorphisms $U/K \ra 
S$ for every $(S,K)\in\O$. The latter corresponds to  $\DC(S,K)$.
\begin{iprf}
\item For each $\ell \mid \pi$ there is a unique subgroup $S\leq \Aut(N)$ of order $\ell$. Since $\Aut(N)$ is abelian, $N_{\Aut(N)}(S)= \Aut(N)$  acts trivially on $S$ by conjugation, so $A_S=1$. This implies that $|\DC(S,K)| = \ind_K$ for each $K$.
\item For each $\ell=0,\ldots,k$ there is a unique $K\unlhd U$ with $|K|=p^\ell$ and  $U/K\cong C_{p^{k-\ell}}$. Each such $K$ satisfies $A_K = \Aut(U/K)$, hence $|\DC(S,K)| = 1$ in all cases.
 \end{iprf}
 \end{proof}


\subsection{Group enumeration}\label{secenum}

\noindent In the subsequent sections we  enumerate the isomorphism types of groups of order $p^2q$, $p^3q$, $p^2q^2$, and $p^2qr$, where $p,q,r$ are distinct primes. We freely use the following results in our discussion. Burnside's Theorem \cite[(8.5.4)]{rob} shows that groups of order $p^aq^b$ are solvable, and such groups have a nontrivial Fitting subgroup (which is the largest nilpotent normal subgroup). A group is nilpotent if and only if it is the direct product of its Sylow subgroups. Up to isomorphism, the groups of order $p$, $p^2$, and $p^3$ are the abelian groups $C_p$, $C_{p^2}$, $C_{p^3}$, $C_p^2$, $C_p^3$, $C_{p^2}\times C_p$, and two extraspecial groups $p_+^{1+2}$ and $p_-^{1+2}$; if $p=2$, then $p_+^{1+2}=D_8$ and $p_-^{1+2}=Q_8$. We often make case distinctions on the structure of the Fitting subgroup and on whether there exists a normal Sylow subgroup. Recall that if $G$ has a normal Sylow subgroup $N$, then $G=U\ltimes N$ for some $U\leq G$ by the Schur-Zassenhaus Theorem \cite[(9.1.2)]{rob}; in particular, the complement $U$ is unique up to conjugacy. The following lemma will be useful.

\begin{lemma}\label{lemFaith} 
If $G$ is a solvable group with abelian Fitting subgroup $F$, then $G/F$ acts faithfully on $F$ via conjugation; in particular, $G/F$ embeds into $\Aut(F)$ and so $|G/F|$ divides $|\Aut(F)|$.
\end{lemma}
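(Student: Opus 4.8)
The statement to prove is Lemma~\ref{lemFaith}: a solvable group $G$ with abelian Fitting subgroup $F$ acts faithfully on $F$ by conjugation.

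The plan is to argue that the kernel of the conjugation action, namely the centraliser $C_G(F)$, must equal $F$ itself, and then note that $G/C_G(F) = G/F$ embeds into $\mathrm{Aut}(F)$. Since $F$ is abelian, certainly $F \leq C_G(F)$, so the real content is the reverse inclusion $C_G(F) \leq F$. I would set $C = C_G(F)$ and observe that $C \unlhd G$ because $F \unlhd G$. If $C \neq F$, then since $F \leq C$ and both are normal in the solvable group $G$, the quotient $C/F$ is a nontrivial normal subgroup of the solvable group $G/F$; hence it contains a nontrivial abelian (indeed elementary abelian) normal subgroup of $G/F$, say $A/F$ with $A \unlhd G$ and $F \lneq A \leq C$.

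The key step is then to derive a contradiction by showing $A$ is a nilpotent normal subgroup of $G$ strictly larger than $F$, contradicting maximality of the Fitting subgroup. Here I would use that $A \leq C = C_G(F)$, so $F \leq Z(A)$; combined with $A/F$ being abelian, this gives that $A$ is nilpotent of class at most $2$ (the lower central series satisfies $\gamma_2(A) = [A,A] \leq F \leq Z(A)$, hence $\gamma_3(A) = 1$). Thus $A$ is a nilpotent normal subgroup of $G$ properly containing $F$, which is impossible since $F$ is the largest nilpotent normal subgroup. Therefore $C_G(F) = F$, the action of $G/F$ on $F$ is faithful, and the embedding $G/F \hookrightarrow \mathrm{Aut}(F)$ together with Lagrange's theorem yields $|G/F| \mid |\mathrm{Aut}(F)|$.

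The main obstacle, such as it is, is purely organisational: making sure the chain of normal subgroups is handled correctly, i.e.\ that one may indeed find a nontrivial \emph{characteristically simple} (hence abelian, by solvability) normal subgroup of $G/F$ sitting inside $C/F$, and that its preimage $A$ in $G$ is normal in $G$ and not merely in $C$. There is no deep difficulty; the argument is a standard application of the defining maximality property of the Fitting subgroup together with the class-$2$ nilpotency criterion $[A,A] \leq Z(A)$.
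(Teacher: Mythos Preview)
Your argument is correct. The paper's own proof is a one-liner: it observes that the kernel of the conjugation action is $C_G(F)$ and then cites \cite[(5.4.4)]{rob} for the fact $C_G(F)=F$ in a solvable group. You take the same reduction to $C_G(F)=F$ but, instead of quoting the textbook, supply a self-contained proof of that inclusion via the class-$2$ nilpotency trick (find an abelian normal $A/F$ inside $C_G(F)/F$, note $[A,A]\leq F\leq Z(A)$, contradict maximality of $F$). So the route is the same; your version is simply more explicit where the paper defers to a reference.
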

\begin{proof}
  The assumptions imply that $F$ is nontrivial and $G$ acts via conjugation on $F$ with kernel $C_G(F)$. Since $G$ is solvable and $F$ is abelian, \cite[(5.4.4)]{rob} shows that $F=C_G(F)$. The claim follows.
\end{proof}

\subsection{The groups of order $p^2q$}\label{pf1}

The groups of order $p^2 q$ have been considered by H\"older 
\cite{Hol93}, by Cole \& Glover \cite{cole}, by Lin \cite{Lin74}, by Laue \cite{Lau82} and in various
other places. The results by H\"older, Lin and Laue agree with ours, although Lin's results have some harmless typos; we have not considered \cite{cole}. We also refer to 
\cite[Proposition 21.17]{BNV07} for an alternative description and proof of the following result.

\begin{proof}[Proof of Theorem \ref{thp2q}{\rm a)}]
  Let $G$ be a group of order $p^2q$. Since $G$ is solvable, the Fitting subgroup satisfies $1<F\leq G$. If $F=G$, then $G$ is nilpotent, so isomorphic to $C_{p^2} \times C_q$ or $C_p^2 \times C_q$. Now let $F<G$. The case $|F|=p$ and $|G/F|=pq$  contradicts Lemma \ref{lemFaith}, thus $|F|\in\{q,pq,p^2\}$ and $F$ contains a Sylow subgroup of $G$. The latter is characteristic in $F$, so normal in $G$. Thus, if $F<G$, then $G$ has a normal Sylow $p$- or Sylow $q$-subgroup, but not both. We make a case distinction and use Proposition~\ref{Taunt}.

If $G$ has a normal Sylow $p$-subgroup, then $G=U\ltimes N$ where $|N|=p^2$ and $U\cong C_q$ acts faithfully on $N$. If $N\cong C_p^2$, then $\Aut(N)\cong \GL_2(p)$ and the number of conjugacy classes of subgroups of $\Aut(N)$ of order $q$ is determined by Proposition \ref{linear2}a). If $N\cong C_{p^2}$, then $\Aut(N)\cong C_{p(p-1)}$ is cyclic and there are $\w_{p-1}^q$ subgroups of order $q$ in $\Aut(N)$. 
 this gives $\tfrac{q+3}{2}\w_{p-1}^q+\w_{p+1}^q+\w_{p-1}^q$ groups if $q > 2$, and $3$ groups if $q = 2$.

If $G$ has a normal Sylow $q$-subgroup, then $G=U\ltimes N$ where $|U|=p^2$ and $N\cong C_q$. To apply Proposition~\ref{Taunt}a) for counting non-abelian extensions, we have 
to consider the $\Aut(U)$-classes of proper normal subgroups $K$ in $U$ 
with $U/K$ cyclic. If $U$ is cyclic, then it has two proper normal subgroups $K$ with 
cyclic quotients; the case $K = 1$ arises if and only if $p^2 \mid (q-1)$ 
and the case $K = C_p$ arises if and only if $p \mid (q-1)$; in both cases 
$\ind_K = 1$, and together  we get $\w_{q-1}^p+\w_{q-1}^{p^2}$ groups. If $U$ is non-cyclic, then there is a unique $\Aut(U)$-class of proper normal subgroups 
$K$ with cyclic quotients, namely $K\cong C_p$ with $\ind_K = 1$; this case arises if $p \mid (q-1)$, and so we get $\w_{q-1}^p$ groups.  
\end{proof}

\subsection{The groups  of order $p^3q$}
Western \cite{Wes99} 
and Laue \cite{Lau82} determined these groups. Western's summary misses one group for $q \equiv 1 \bmod p$, but this group is mentioned in \cite[Section 13]{Wes99}; there are further minor issues in \cite[Section 32]{Wes99}. There are disagreements between our results and \cite[pp.~224-6]{Lau82} for the case $p=2$ and the case $q=3$; we have not tried to track the origin of these in Laue's work.

\begin{proof}[Proof of Theorem~\ref{thp3q}{\rm b)}]
  We follow the strategy of the proof of Theorem \ref{thp2q}a). Every group $G$ of order $p^3q$ is solvable and there are five nilpotent groups of order $p^3 q$; we now consider non-nilpotent groups.

  \medskip
  
 \noindent\emph{{\bf Case 1:} Non-nilpotent with normal Sylow $p$-subgroup.} Suppose $G=U\ltimes N$ with $|N|=p^3$ and $|U|=q$.  Proposition \ref{Taunt}b) shows that the isomorphism types of these groups correspond to the conjugacy classes of subgroups of order $q$ in $\Aut(N)$. We first consider the special case $p=2$: a direct computation shows that the only options are $N=C_2^3$ with $q\in\{3,7\}$ (one class each) and $N=Q_8$ with $q=3$ (one class). We now consider $p>2$ and discuss the different subgroups $N$.  

 If $N$ is cyclic, then $\Aut(N)\cong C_{p^2(p-1)}$ and there are $c_1=\w_{p-1}^q$ groups. If $N\cong C_{p^2}\times C_p$, then a short argument shows that $|\Aut(N)|=p^3(p-1)^2$; since $p>2$, the Sylow Theorem \cite[p.\ 40]{rob} implies that $\Aut(N)$ has a normal Sylow $p$-subgroup. Up to conjugacy, this Sylow $p$-subgroup has a unique complement  $C_{p-1}^2$, thus $\Aut(N)$ has  $c_2=(q+1)\w_{p-1}^q$  conjugacy classes of subgroups of order~$q$. If $N$ is extraspecial of exponent $p$, then there is an epimorphism $\Aut(N)\to \GL_2(p)$ with kernel $C_p^2$, see \cite[Theorem 1]{espec}, and so the conjugacy classes of subgroups $C_q$ in $\Aut(N)$ correspond to conjugacy classes of subgroups $C_q$ in $\GL_2(p)$. Proposition~\ref{linear2}a) shows that the number of such subgroups is $c_3=2$ if $q=2$, and $c_3=  \tfrac{1}{2}(q+3)\w_{p-1}^q + \w_{p+1}^q$ if $q>2$. If $N$ is extraspecial of exponent $p^2$, then \cite[Theorem~1]{espec} shows that $\Aut(N)$ has a normal Sylow $p$-subgroup and a complement isomorphic to $C_{p-1}$; this yields $c_4=\w_{p-1}^q$ groups.  Lastly, suppose $N\cong C_p^3$ is elementary abelian, so $\Aut(N)\cong \GL_3(p)$, and Proposition~\ref{linear2}d) yields $c_5=3$ groups if $q=2$; if $q>2$, then the number of groups we have to add is
\[ c_5 = \tfrac{1}{6}(q^2+4q+9 + 4\w_{q-1}^3) \w_{p-1}^q
+ \w_{(p+1)(p^2+p+1)}^q(1-\w_{p-1}^q).\]

\noindent\emph{{\bf Case 2:} Non-nilpotent with normal Sylow $q$-subgroup.} Suppose $G=U\ltimes N$ with $|N|=q$ and $|U|=p^3$.
Using Proposition
\ref{Taunt}, we have to find the $\Aut(U)$-orbits of proper normal 
subgroups $K\unlhd U$ with $U/K$ cyclic of order dividing $q-1$, and then 
for each such $K$ determine $\ind_K$. We make a case distinction on $U$.

If $U$ is cyclic, then the possibilities are $K\in\{1,C_p,C_{p^2}\}$ with each $\ind_K=1$; the number of groups is \[c_6=\w_{q-1}^p + \w_{q-1}^{p^2} + \w_{q-1}^{p^3}.\]

If $U\cong C_{p^2}\times C_p$, then there are two $\Aut(U)$-orbits of normal subgroups $K$ with $U/K \cong C_p$ and one $\Aut(U)$-orbit of $K\unlhd U$ with $U/K \cong C_{p^2}$; in each case, $\ind_K = 1$, so the number of groups is\[c_7 = 2 \w_{q-1}^p + \w_{q-1}^{p^2}.\]

If $U$ is extraspecial of exponent $p$ (or $U\cong Q_8$), then there is a unique $\Aut(U)$-orbit of normal subgroups $K$ with $U/K \cong C_p$; in this case, $\ind_K = 1$, and we have $c_8 = \w_{q-1}^p$ groups. If $U$ is extraspecial of exponent $p^2$ (or $U\cong D_8$), then there are two 
$\Aut(U)$-orbits of normal subgroups $K$ with $U/K \cong C_p$; one has 
$\ind_K = 1$ and the other has $\ind_K = p-1$, which yields $c_9 = p \w_{q-1}^p$ groups. Lastly, if $U\cong C_p^3$, then there is one $\Aut(U)$-orbit of $K\unlhd U$ with $U/K \cong C_p$; here $\ind_k = 1$, and we have $c_{10} = \w_{q-1}^p$ groups.

\medskip

\noindent\emph{{\bf Case 3:} Non-nilpotent with no normal Sylow subgroups.}
The Fitting subgroup $F$ satisfies $1<F<G$ and, as in the proof of Theorem \ref{thp2q}, we require that $F$ does not contain a Sylow subgroup of $G$. This forces $|F|=p$ or $|F|=p^2$, and Lemma \ref{lemFaith} yields $F\cong C_p^2$ as the only option. In this case $G/F$ has order $pq$ and embeds into $\Aut(F)\cong \GL_2(p)$, so $q\mid (p^2-1)$. If $G/F$ has a normal subgroup of order $p$, then its preimage in $G$ would be a normal Sylow subgroup, which is not possible. This shows that  $G/F\cong C_p\ltimes C_q$ and $p\mid (q-1)$. Since we also have $q\mid (p^2-1)$, we deduce $(p,q)=(2,3)$; this cases is treated separately.
\medskip

\noindent In summary, the cases $(p,q)\in\{(2,3),(2,7)\}$ are dealt with by a direct computation. For the other cases, it  remains to compute $5 + \sum_{i=6}^{10} c_i$ for $p = 2$ and $5 + \sum_{i=1}^{10} c_i$ for
$p > 2$; this yields the claimed formulas.
\end{proof}

\subsection{The groups of order $p^2q^2$}
The groups of order $p^2 q^2$ have been determined by Lin \cite{Lin74}, 
Le Vavasseur \cite{Vav02} and Laue \cite{Lau82}. Lin's work has only minor mistakes and is essentially 
correct; it agrees with our work and \cite[pp.~214-43]{Lau82}. Lin seems unaware of  \cite{Vav02}; we have not compared our results with those in \cite{Vav02}.

\begin{proof}[Proof of Theorem \ref{thp2q2}{\rm c)}]
  We assume $p>q$. Every group $G$ of order $p^2q^2$ is solvable and there are four nilpotent groups of order $p^2 q^2$. The case $(p,q)=(3,2)$ is dealt with by a direct calculation;  if  $(p,q)\ne (3,2)$, then the Sylow Theorem \cite[p.\ 40]{rob} implies that $G$ has a normal Sylow $p$-subgroup. Below we assume that  $G=U\ltimes N$ is non-nilpotent with $|N|=p^2$ and $|U|=q^2$. We use Proposition \ref{Taunt} below.

  First, let $N$ be cyclic, so $\Aut(N)\cong C_{p(p-1)}$. If $U$ is cyclic as well, then $\Aut(N)$ has at most subgroup of order $q$ and one subgroup of order $q^2$, respectively; this yields $c_1=\w_{p-1}^{q^2} + \w_{p-1}^q$ groups. If $U$ is not cyclic, then there is one $\Aut(U)$-orbit of $K\unlhd U$ with $U/K$ nontrivial cyclic; this yields $c_2=\w_{p-1}^q$ groups.

  Now let $N$ be non-cyclic and identify $\Aut(N)=\GL_2(p)$. For both possibilities of $U$, the $\Aut(U)$-orbits of proper subgroups of $U$ are $\mathcal{K}=\{1,C_q\}$. If $K\cong C_q$, then $U/K\cong C_q$ and $A_K=\Aut(U/K)$; it remains to count the number $c_3$ of conjugacy classes of subgroups of order $q$ in $\GL_2(p)$; Proposition~\ref{linear2}a) yields $c_3=2$ if $q=2$ and $c_3=\tfrac{q+3}{2}\w_{p-1}^q+\w_{p+1}^q$ otherwise. Note that $c_3$ has to be counted twice, for $U\cong C_{q^2}$ and for $U\cong C_q^2$. If $K=1$, then $A_K=\Aut(U/K)$ and it remains to count the number $c_4$ of conjugacy classes of subgroups of order $q^2$ in $\GL_2(p)$; this time we obtain $c_4=1+4\w_{p-1}^4+\w_{p+1}^4$ if $q=2$ and $c_4=\w_{p-1}^q+\tfrac{q^2+q+2}{2}\w_{p-1}^{q^2}+\w_{p+1}^{q^2}$ otherwise. The number of groups of order $p^2q^2$ now is $4+c_1+c_2+2c_3+c_4$. We use $\w_{p-1}^4=1-\w_{p+1}^4$ to rewrite the formula for $q=2$.
\end{proof}

\subsection{The groups of order $p^2 qr$}\label{pf2}

The groups of order $p^2 qr$ have been considered by Glenn \cite{Gle06} and
Laue \cite{Lau82}. Glenn's work has several problems: some groups are missing, there are duplicates, and some invariants are not correct. Laue \cite[p. 244-62]{Lau82} also does not agree with \cite{Gle06}; we have not 
compared our results with those of Laue.

\begin{proof}[Proof of Theorem \ref{thp2qr}{\rm d)}]
  Every non-solvable group has a non-abelian simple composition factor, which implies that the only non-solvable group of order $p^2qr$ is the alternating group $A_5$ of order $60$, see \cite[Theorem 2]{DEi05}. There are $c_0=2$ nilpotent groups of order $p^2qr$, and we now consider solvable non-nilpotent groups $G$ of order $p^2qr$. We make a case distinction on the Fitting subgroup $F\leq G$. By assumption, $1<F<G$, and Lemma~\ref{lemFaith} implies that $G/F$ embeds into $\Aut(F)$. We organise our case distinction by the number of prime factors of $|F|$; as before, we freely use Proposition \ref{Taunt}.

\smallskip
  
  \noindent\emph{{\bf Case 1:} $|F|$ is a prime.} If $|F|=p$, then $\Aut(F)=C_{p-1}$ and $G/F\cong C_{prq}$. This implies that $G$ has a normal (nilpotent) Sylow $p$-subgroup, a contradiction to $|F|=p$. If $|F|=q$, then $|G/F|=p^2r$ divides $q-1$, contradicting $r>q$. If $|F|=r$, then  $G\cong C_{p^2q}\ltimes F$, and there are $c_1=\w_{r-1}^{p^2q}$ such groups; note that $c_1=\w_{r-1}^{p^2}$ for $q=2$.

  \medskip

  \noindent\emph{{\bf Case 2:} $|F|$ is the product of two primes.}  If $F\cong C_{p^2}$, then $G\cong C_{qr}\ltimes F$ and $qr\mid (p-1)$; this gives $c_2=\w_{p-1}^{qr}$ groups, with $c_2=w_{p-1}^r$ if $q=2$. If $F\cong C_p^2$, then $G$ splits over $F$ and  $G/F$ can be considered as a subgroup of $\GL_2(p)$ of order $qr$. Using Proposition~\ref{linear2}c), the number of groups arising in this case is
\[c_3 =\begin{cases} \tfrac{3r+7}{2} \w_{p-1}^r + 2 \w_{p+1}^r &\text{(if $q=2$)}\\[0.7ex]
\tfrac{qr + q + r + 5}{2} \w_{p-1}^{qr} +  \w_{p^2-1}^{qr}(1-\w_{p-1}^{qr}) &\text{(otherwise.)}
\end{cases}\]
If $F\cong C_{pq}$, then $G/F\cong C_{pr}$ embeds into $\Aut(F)=C_{p-1}\times C_{q-1}$; since $r>q$, we have $r\mid (p-1)$ and $p\mid (q-1)$, but $r\leq p-1\leq q-2$ also contradicts $r>q$.

If $F\cong C_{pr}$, then $G/F\cong C_{pq}$, which forces $p\mid (r-1)$. Note that $G$ splits over the (normal) Sylow $r$-subgroup of $F$, so $G\cong \bar G\ltimes C_r$ where $\bar G$ has Fitting subgroup $\bar F\cong C_p$ with $\bar G/\bar F\cong C_{pq}$. There are two cases to consider. First, suppose that the Sylow $q$-subgroup of $G/F$ acts non-trivially on the Sylow $p$-subgroup of $F$. Then $q\mid (p-1)$ and $G\cong G/F \ltimes_\varphi F$ follows from Lemma \ref{cubefree} applied to $\bar F$ and $\bar G/\bar F$. As in Proposition \ref{Taunt}, the number of such groups is
given by the number of subgroups of order $pq$ in $\Aut(F)$. As the image 
of the Sylow $p$-subgroup of $G/F$ under $\varphi$ is uniquely determined, the number of groups is determined by the number of subgroups of order $q$ in $\Aut(F)\cong C_{p-1}\times C_{r-1}$ 
that act non-trivially on the Sylow $p$-subgroup of $F$; this number is $1 + (q-1) \w_{r-1}^q$. Second, suppose that the Sylow $q$-subgroup of $G/F$ acts trivially on the Sylow $p$-subgroup of $F$; then $q\mid (r-1)$ and the action of 
$G/F$ on $F$ is uniquely determined. By Lemma~\ref{cubefree}, there are two isomorphism types of extensions in this case. In summary, if $F\cong C_{pr}$, then the number of groups is
\[ c_4 =\begin{cases} 4\w_{r-1}^p &\text{(if $q=2$)}\\ \w_{r-1}^p( \w_{p-1}^q (1+ (q-1)\w_{r-1}^q) +2 \w_{r-1}^q )&\text{(otherwise.)}
\end{cases}\]

If $F\cong C_{qr}$, then $G=U\ltimes_\varphi F$ with $|U|=p^2$. We have to count the number of 
subgroups of order $p^2$ in $\Aut(F)\cong C_{q-1}\times C_{r-1}$. The number of subgroups  $C_p^2$ is $c_{5}' = \w_{q-1}^p \w_{r-1}^p$, and it remains determine the number of  subgroups $C_{p^2}$. This number depends on $\gcd(q-1, p^2) = p^a$ and 
$\gcd(r-1,p^2) = p^b$. If $a, b \leq 1$, then there is no subgroup $C_{p^2}$. If $(a,b)\in\{(2,0),(0,2)\}$, then there is one subgroup each; if $(a,b)\in\{(2,1),(1,2)\}$, then there are $p$ subgroups each; if $(a,b)=(2,2)$, then there are $p(p+1)$ subgroups. Together, the number of  cyclic subgroups $C_{p^2}$ is 
\[ c_5''= p(p+1)\w_{q-1}^{p^2}\w_{r-1}^{p^2} + \sum_{(u,v)\in\{(q,r),(r,q)\}} \left(\w_{u-1}^{p^2}(1-\w_{v-1}^p)+p \w_{u-1}^{p^2}\w_{v-1}^p(1-\w_{v-1}^{p^2})\right).\]
In summary, if $F\cong C_{qr}$, then we get $c_5=c_5'+c_5''$ groups, which can be written as  $c_5=  \w_{r-1}^{p^2}$ for $q=2$, and if $q>2$, then 
\[
c_5= (p^2 -p ) \w_{q-1}^{p^2} \w_{r-1}^{p^2} + (p-1) (\w_{q-1}^{p^2} \w_{r-1}^p + \w_{r-1}^{p^2} \w_{q-1}^p) + \w_{q-1}^{p^2} + \w_{r-1}^{p^2} + \w_{q-1}^p \w_{r-1}^p.
\]

 \noindent\emph{{\bf Case 3:} $|F|$ is the product of three primes.} If $F\cong C_{pqr}$, then $G= U\ltimes_\varphi N$ with $N\cong C_{qr}$ and $|U|=p^2$, and $\ker\varphi\cong C_p$. The group $U$ is either cyclic or elementary abelian, but in both cases there is a unique $\Aut(U)$-orbit of normal subgroups $K\unlhd U$ with $K\cong C_p$, and we have $A_K=\Aut(U/K)$; it remains to count the number of subgroups $C_p$ in $\Aut(N)\cong C_{q-1}\times C_{r-1}$. If $F\cong C_{pqr}$, then the number of groups is
 \[ c_6 = \begin{cases}
  2 \w_{r-1}^p &\text{(if $q=2$)}\\
   2( \w_{q-1}^p + \w_{r-1}^p + (p-1)\w_{q-1}^p\w_{r-1}^p) & \text{(otherwise.)}
 \end{cases}\]

If $|F|=p^2q$, then $G=U\ltimes_\varphi F$ with $F=N\times M$ with $|N|=p^2$, $M\cong C_q$, and $U\cong C_r$. We have to count the number of conjugacy class representatives of subgroups of order $r$ in 
$\Aut(F)\cong\Aut(N)\times C_{q-1}$. Since $r > q$, we need to count the number of conjugacy classes of subgroups of order
$r$ in $\Aut(N)$. If $N$ is cyclic, then $\Aut(N) \cong C_{p(p-1)}$ and we get $\w_{p-1}^r$ groups; if $N \cong C_p^2$, then $\Aut(N) \cong \GL_2(p)$ and Proposition~\ref{linear2}a) applies. In summary, if $|F|=p^2q$, then the number of groups is 
\[ c_7 = \tfrac{r+5}{2} \w_{p-1}^r + \w_{p+1}^r.\]

The case $|F|=p^2r$ is dual to the previous one, with the exception that now the bigger prime $r>q$ divides $|F|$. We have $G=U\ltimes_\varphi F$ and there are two possibilities for $F$. If $F\cong C_{p^2r}$, then $\Aut(F)$ is isomorphic to $C_{p(p-1)}\times C_{r-1}$ and we count the desired subgroups as
\[ c_{8}' = \w_{p-1}^q + \w_{r-1}^q + (q-1) \w_{p-1}^q\w_{r-1}^q. \]

The final case $F\cong C_p^2\times C_r$ is more complicated and required a slightly different approach. All groups in this case have the form $G=U\ltimes_\varphi N$ with $N\cong C_p^2$ and $|U|=qr$ such that $\varphi$ has kernel of order $r\mid |K|$, where $U/K$ is cyclic. There are two possibilities for $U$, either $U\cong C_{qr}$ or $U\cong C_q\ltimes C_r$ with $q\mid (r-1)$. If $U$ is cyclic, then $K\cong C_r$ (since $G$ is non-nilpotent); now  we  count the conjugacy classes of subgroups of order $q$ in $\Aut(N)$, which by Proposition~\ref{linear2} is determined as $c_8''=2$ if $q=2$ and $c_8''=\tfrac{q+3}{2}\w_{p-1}^q+\w_{p+1}^q$ otherwise. If $U$ is non-abelian, then $|K|\in\{r,qr\}$. If $K\cong C_{qr}$, then $\varphi$ is trivial hence there are $c_8'''=\w_{r-1}^q$ groups. It remains to consider $K\cong C_r$. Studying $\Aut(U)$ via the proof of Proposition \ref{Compat}, one can deduce that $A_K=1$. Thus, if $\SS$ is a set of conjugacy class representatives of 
subgroups of order $q$ in $\GL_2(p)$, then the number of groups arising in this case is
\[\w_{r-1}^q\sum\nolimits_{S \in \SS} |\Aut(U/K) / A_S|.\]
Here $A_S$ can be determined via Proposition~\ref{linear2}a), and since $\Aut(U/K)\cong C_{q-1}$, it follows that there are $c_8''''=2$ groups if $q=2$, and if $q>2$, then
\[
c_{8}'''' = \w_{r-1}^q\left(\tfrac{(q-1)(q+2)}{2} \w_{p-1}^q + \tfrac{q-1}{2} \w_{p+1}^q\right).
\]
In summary, this case adds $c_8 = c_{8}' + c_{8}'' + c_{8}'''+c_8''''$ groups, which is  $c_8 = 8$ if $q = 2$, and if $q > 2$ then 
\begin{eqnarray*}
c_8 &=&  \tfrac{(q-1)(q+4)}{2} \w_{p-1}^q \w_{r-1}^q 
        + \tfrac{q-1}{2} \w_{p+1}^q \w_{r-1}^q 
        + \tfrac{q+5}{2} \w_{p-1}^q  
        + 2 \w_{r-1}^q + \w_{p+1}^q.  
\end{eqnarray*}
Now $c_0+c_1+\ldots+c_8$ yields the formula for $p^2qr\ne 60$ given in the theorem.
\end{proof}

\section{Construction and identification}\label{secconst}
\noindent For $n\in\mathcal{O}$ let $\mathcal{G}_n$ be the list of all (isomorphism types of) groups of order $n$.

\subsection{Construction by ID} The proof Theorem \ref{thp2q} determines the size $\GN(n)$ of $\mathcal{G}_n$ by making various case distinctions on the structure of the groups $G$ of order $n$. Distinguishing features are to consider whether $G$ is nilpotent or non-nilpotent, whether $G$ has a normal Sylow subgroup, or whether the Fitting subgroup of $G$ has a specific structure. For each of these properties, the counting formulas developed in the proof of Theorem~\ref{thp2q} tell us exactly how many isomorphism types of these groups exist. This allows us to partition  $\mathcal{G}_n$ into various \emph{clusters} such that the groups within one cluster are split extensions that essentially only differ by having different action homomorphisms; it then remains to sort the different actions in a canonical way; this is explained in the proof of Theorem \ref{thp2q} and makes use of the canonical generators and matrices defined in Notation \ref{not1}. The formulas for the numbers of groups in each cluster (see the right columns in the tables) are the key ingredient that allows us to directly construct the $i$-th group in $\mathcal{G}_n$ \emph{without} constructing the whole list of group. A similar approach is used for the construction functionality provided by the SmallGroups library and by the algorithms in \cite{cgroups}; please cf.\ Footnote \ref{ftID}. Here we exemplify this approach by discussing the groups of order~$p^2q$; extensive details for all order types are given in the MPhil thesis \cite{xp} of the third~author.

\begin{example}\label{ex1}
  Let $n=p^2q$ with $q>2$. The proof of Theorem \ref{thp2q} partitions the groups in $\mathcal{G}_{n}$ in five cluster as given in Table \ref{tablep2q}. The groups in Cluster 1 are nilpotent and can be sorted by their exponents. Clusters~3 and 4 each contain at most one group, and so does Cluster 2 if $q\nmid (p-1)$. Cluster 5 contains at most two groups, and they can be distinguished by the order of the action group (the Sylow $p$-subgroup acting on the Sylow $q$-subgroup). If $q\mid (p-1)$, then Cluster 2 contains $(q+3)/2$ isomorphism types of groups and these are parametrised by the conjugacy classes of diagonalisable subgroups of $\GL_2(p)$ of order $q$, see the proof of Proposition~\ref{linear2}a). The latter proof also explains how to list these classes canonically: if $\sigma_p\in\Z_p^\ast$ and $\sigma_q\in\Z_q^\ast$ are the canonical generators and $a=\sigma_p^{(p-1)/q}$, then the subgroups we have to consider can be sorted as $\langle \diag(a,1)\rangle$ and $\langle M(p,q,\sigma_q^k))\rangle$ with $k\in\{0,\ldots, (q - 1)/2\}$, where each $M(p,q,\sigma_q^k)=\diag(a,a^{\sigma_q^k})$. For example, if $n=29^2.7$, then  Clusters 1--5 have $2$, $5$, $1$, $0$, $0$ groups, respectively, so the group with ID  $(29^2.7,6)$ is  $G=C_7\ltimes C_{29}^2$ where a generator $u\in C_7$ acts on  generators $v,w\in C_{29}^2$ via $M(29,7,\sigma_7^2)=\diag(a,a^{(\sigma_7^2)})$; here $\sigma_{29}=2$ and $\sigma_7=3$, so $a=\sigma_{29}^4=16$ and $a^{(\sigma_7^2)}=24$. Thus, $G$ can be defined via the presentation $\langle u,v,w \mid u^7,v^{29},w^{29},w^v/w, v^u/v^{16}, w^u/w^{24}\rangle$.
\end{example}

\subsection{Identification of groups} The identification of groups of order $n\in\mathcal{O}$ reverses the construction process: given a group $G$ of order $n$, we first determine to which cluster the group belongs by computing the Fitting and Sylow subgroups.  Then we determine the actions that arise. Lastly, it  remains to decide to which canonical action (namely, the one used in the construction-by-ID) this action is equivalent to.  We exemplify this process with another example and refer to  \cite{xp} for more information; the  main idea is to exploit the known structure of the relevant action groups described in the proof of Proposition~\ref{linear2}.

\begin{example}\label{ex2}
  Consider the group $G=\langle u,v,w \mid u^7,\;v^{29},\;w^{29},\;w^v/w,\; v^u/v^{24},\; w^u/v^{11}w^{7}\rangle$.  We determine $n=|G|=29^2.7$ and compute a Sylow $29$-subgroup $N$ and a Sylow $7$-subgroup $U$. We find  $N\unlhd G$, so $G$ is a split extension of $U$ by $N$. Since  $N\cong C_{29}^2$, we know that $G$ is a group in Cluster~2 as given in Table~\ref{tablep2q}. We choose generators $u\in U$ and $v,w\in N$ and observe that  $v^u=v^{24}$ and $w^u=v^{11}w^{7}$. (Here this can be determined directly from the  presentation, but if $G$ is given as a matrix or permutation group, then we would have likely chosen different generators.) Thus, $u$ acts on  $v,w$ via $\left(\begin{smallmatrix} 24&0\\11&7\end{smallmatrix}\right)\in\GL_2(29)$. This matrix has eigenvalues $\{24,7\}$, so it is conjugate to $\diag(24,7)=\diag(a^2,a^3)$ with $a=16$ as in Example \ref{ex1}. We have $\diag(a^2,a^3)^4=\diag(a,a^{(3^5)})=M(29,7,\sigma_7^5)$, but the parameter $5$ is greater than $(q-1)/2=3$; since  $3^{-5}=3$ in $\Z_7^\ast$, the proof of Proposition~\ref{linear2}a) shows that $\langle M(29,7,\sigma_7^5)\rangle$ is conjugate to $\langle M(29,7,\sigma_7)\rangle$. This determines the parameter $k=1$, and therefore $G$ has ID $(29^2.7,5)$.
\end{example}

\subsection{Implementation and Performance} \label{secalg}   
 
A GAP implementation of our algorithms  is  available in \cite{xp}.
Using this implementation we checked that our determination coincides (up to permuting the ordering of the lists of groups)
with that in the SmallGroups library, that obtained by the GrpConst 
package, see Besche-Eick-O'Brien \cite{BEO02}, and that obtained by the
Cubefree package, see Dietrich-Eick \cite{DEi05} and Dietrich-Wilson
\cite{cf}, for a large range of orders.

We now comment on the performance of our  algorithms;  all computations have been carried out with GAP 4.11.0 on a computer with  Intel(R) Core(TM) i5-7500 CPU@3.40GHz and 16GB RAM.\footnote{All our runtimes have been determined by using the option \url{USE_NC:=true} in our code: this avoids that GAP tests consistency of  polycyclic presentations, which becomes a major bottleneck when large primes are involved.} At the time of writing, the SmallGroups library contains the following orders discussed in this paper: $p^2q$ for all primes $p\ne q$, and $q^np$ for primes $p\ne q$ with  $q^n$ dividing one of $\{2^8,3^6,5^5,7^4\}$, and all relevant orders up to $2000$.

\begin{iprf}
\item[$\bullet$] There are 20514 groups of order $p^2q$ at most $10^5$. SmallGroups required 196 seconds to construct these groups, while our code took  16 seconds. Our code identified the groups constructed with SmallGroups in 78 seconds, whereas SmallGroups required 778 seconds to identify our groups. Up to order $10^6$, there are  159800 groups and our code required 120 seconds for the construction;  SmallGroups took 15181 seconds.  The reason for the increased runtime of SmallGroups is because for some order types the construction of groups involves some computations (akin to the ones described below), whereas our code directly writes down the group presentations.
\item[$\bullet$] There are 74562 groups of order $p^2q$, $p^3q$, $p^2q^2$ or $p^2qr$ at most 50000  available in the SmallGroups library. Our code required 47 seconds to construct these groups, whereas SmallGroups required 27359 seconds. Moreover, SmallGroups took 43356 seconds to identify our groups, while our code required 259 seconds to identify the groups constructed with SmallGroups.
\item[$\bullet$] Our code is also practical for larger primes; e.g.\ the construction of the $37371$, $6566$, and $21348$ groups of order $9341^3.467$, $127691^2.113^2$, and $415631^2.467.89$ took $32$, $5$, and $17$ seconds, respectively.\footnote{The performance of our code is even better ($5$, $1$, and $3$ seconds, respectively) if groups are returned as GAP objects {\tt pcp-group} (instead of {\tt pc-group}) by setting  \url{USE_PCP:=true}.} For such large primes we cannot compare our result with those of {\tt GrpConst} or {\tt Cubefree}, because the latter computations do not terminate in reasonable time (within a few hours). This is partly because these packages use general purpose algorithms which invoke computations with group homomorphisms and matrix groups. Our code avoids these bottlenecks by directly writing down polycyclic presentations of the (solvable) groups; the main bottleneck in our code seems to be GAP's pc-group arithmetic for large primes. 
\end{iprf}
We plan to extend the functionality of our implementation to other order types. For example, we will soon include construction and identification functionality for groups of order $p^4q$,  that is, our work makes most of the enumeration results of Eick-Moede  \cite{EM} constructive.

\def\cprime{$'$} \def\cprime{$'$}

\bibliographystyle{line}

\end{document}